\newcommand{\sps}[1]{\left( #1 \right)}
\newcommand{\eval}[1]{\left. #1 \right\rvert}
\newcommand{\abs}[1]{\left \lvert #1 \right \rvert}
\newcommand{\norm}[1]{\left \lVert #1 \right \rVert}
\newcommand{\jap}[1]{\left\langle #1 \right\rangle}
\newcommand{\abs}[1]{\lvert #1 \rvert}
\newcommand{\norm}[1]{\lVert #1 \rVert}
\newcommand{\jap}[1]{\langle #1 \rangle}
\newcommand{\loc}{{\mathrm{loc}}}
\DeclareMathOperator{\Lip}{Lip}
\DeclareMathOperator{\divergence}{div}
\renewcommand{\div}{\divergence}
\newcommand{\R}{\mathbb{R}}
\newcommand{\C}{\mathbb{C}}
\newcommand{\comment}[1]{}
\renewcommand{\bar}{\overline}
\let\lll\ll
\let\ggg\gg
\renewcommand{\ll}{\lesssim}
\renewcommand{\gg}{\gtrsim}
\newcommand{\pd}{\partial}
\newcommand{\thnorm}[1]{|\kern-1pt|\kern-1pt| #1 |\kern-1pt|\kern-1pt|}
\theoremstyle{remark}
\newtheorem*{remark*}{Remark}
\theoremstyle{definition}
\newtheorem*{dfn*}{Definition}
\theoremstyle{plain}
\newtheorem*{theorem*}{Theorem}
\newtheorem*{proposition*}{Proposition}
\newtheorem{theorem}{Theorem}[section]
\newtheorem*{lemma*}{Lemma}
\newtheorem{lemma}{Lemma}[section]
\newtheorem*{corollary*}{Corollary}
\newtheorem{corollary}{Corollary}[section]
\begin{document}

\title{Uniqueness in Calder\'on's problem with Lipschitz conductivities}
\author{Boaz Haberman and Daniel Tataru}
\thanks{Both authors were partially supported by NSF grant DMS0801261.}
\begin{abstract}
  We use $X^{s,b}$-inspired spaces to prove a uniqueness result for
  Calder\'on's problem in a Lipschitz domain $\Omega$ under the
  assumption that the conductivity lies in the space
  $W^{1,\infty}(\bar \Omega)$. For Lipschitz conductivities, we obtain
  uniqueness for conductivities close to the identity in a suitable
  sense. We also prove uniqueness for arbitrary $C^1$ conductivities.
\end{abstract}
\maketitle

\section{Introduction}
Calder\'on's problem asks whether one can recover the conductivity of
an object in its interior based on measurements made at the
boundary. Let $\Omega\subset \R^d$ be some bounded domain with
Lipschitz boundary, and let $\gamma$ be a strictly positive
real-valued function defined on $\Omega$ which gives the conductivity
at a given point. An electrical potential $u$ in this situation
satisfies the conductivity equation
\[L_\gamma u = 0,\]
where
\[L_\gamma u := \div(\gamma \nabla u).\]
Given $f\in H^{1/2}(\pd \Omega)$, there exists a unique solution $u_f$ to the Dirichlet problem 
\begin{align*}
L_\gamma u_f & = 0 \quad \text{in $\Omega$}\\
\eval{u_f}_{\pd\Omega} & = f,
\end{align*}
and hence we may formally define the Dirichlet-to-Neumann map $\Lambda_\gamma$ by
\[
\Lambda_\gamma(f) := \eval{\gamma \frac{\pd u_f}{\pd \nu}}_{\pd \Omega},
\]
where $\pd/\pd \nu$ is the outward normal derivative at the
boundary. If $\gamma \in \Lip(\bar \Omega)$, then $\Lambda_\gamma$ is
a well defined map from $H^{1/2}(\pd \Omega)$ to $H^{-1/2}(\pd
\Omega)$. In physical terms, this map encodes how the boundary
potential determines the current flux across the
boundary. Calder\'on's inverse problem is to reconstruct $\gamma$ from
the map $\Lambda_\gamma$; an obvious condition for this to be possible
is that the map $\gamma \mapsto \Lambda_\gamma$ be injective. 

A key result in this direction was obtained by Sylvester and Uhlmann
in~\cite{Sylvester1986}; there they proved uniqueness for $C^2$
conductivities. Later Brown~\cite{Brown1996} relaxed the regularity of
the conductivity to $3/2+\epsilon$ derivatives.  This was followed by
uniqueness for $W^{3/2,\infty}$ conductivities in~\cite{Paivarinta2003}
and for $W^{3/2,p}$ (with $p > 2n$) in~\cite{Brown2003}.  

It has
been conjectured by Uhlmann that the optimal assumption is that the conductivities are Lipschitz. Our main theorem asserts that
uniqueness holds for $C^1$ conductivities and Lipschitz conductivities close to the identity. We have no
counterexample to prove that this result is optimal.

\begin{theorem}
Let $\Omega \subset \R^d$, $d\geq 3$ be a bounded domain with Lipschitz boundary. For $i = 1,2$, let $\gamma_i \in W^{1,\infty}(\bar \Omega)$ be real valued functions, and assume there is some $c$ such that $\gamma_i > c > 0$. Then there exists a constant $\epsilon_{d,\Omega}$ such that if each $\gamma_i$ satisfies either $\norm{\nabla \log \gamma_i}_{L^\infty(\bar \Omega)} \leq \epsilon_{d,\Omega}$ or $\gamma_i \in C^1(\bar \Omega)$ then $\Lambda_{\gamma_1} = \Lambda_{\gamma_2}$ implies $\gamma_1 = \gamma_2$.
\end{theorem}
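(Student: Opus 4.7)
The plan is to follow the Sylvester--Uhlmann scheme---construct complex geometrical optics (CGO) solutions $u_j=e^{x\cdot\zeta_j}(1+\psi_{\zeta_j})$ with $\zeta_j\cdot\zeta_j=0$ and $|\zeta_j|$ large, substitute them into an integral identity following from $\Lambda_{\gamma_1}=\Lambda_{\gamma_2}$, and read off the Fourier transform of $\gamma_1-\gamma_2$---but build the correction $\psi_\zeta$ in Bourgain-type spaces $X^{s,b}_\zeta$ adapted to the characteristic set $\Sigma_\zeta=\{p_\zeta=0\}$ of the conjugated symbol $p_\zeta(\xi)=-|\xi|^2+2i\zeta\cdot\xi$. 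To begin, I would extend each $\gamma_i$ by $1$ outside $\Omega$ and apply Alessandrini's identity to obtain
\[
\int_\Omega (\gamma_1-\gamma_2)\,\nabla u_1\cdot\nabla u_2\,dx=0
\]
whenever $L_{\gamma_i}u_i=0$ and the $u_i$ have matching Dirichlet data on $\pd\Omega$.

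Using $L_\gamma=\gamma(\Delta+\nabla\log\gamma\cdot\nabla)$, the ansatz $u=e^{x\cdot\zeta}(1+\psi)$ reduces the equation $L_\gamma u=0$ to
\[
P_\zeta\psi:=(\Delta+2\zeta\cdot\nabla)\psi=-\nabla\log\gamma\cdot\bigl(\zeta(1+\psi)+\nabla\psi\bigr).
\]
The surface $\Sigma_\zeta$ is a $(d-2)$-sphere of radius $|\zeta|$, and the classical $L^2\to L^2$ bound $\|P_\zeta^{-1}f\|_{L^2}\lesssim|\zeta|^{-1}\|f\|_{L^2}$ is just insufficient here, since the right-hand side above carries a factor of $\zeta$ that cancels the gain in $|\zeta|$. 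Instead, I would work in the $X^{s,b}_\zeta$-norms $\||\xi|^s\,d(\xi,\Sigma_\zeta)^b\,\widehat\psi\|_{L^2}$, in which $P_\zeta^{-1}$ gains half a derivative relative to $\Sigma_\zeta$, and prove the multiplier/bilinear estimates necessary to solve the fixed-point equation for $\psi_\zeta$ by contraction, with $\|\psi_\zeta\|\to 0$.

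The main obstacle is the critical term $\zeta\cdot\nabla\log\gamma\,\psi$, whose scaling matches that of the principal term $2\zeta\cdot\nabla\psi$; symbolically it can only be absorbed either when $\|\nabla\log\gamma\|_{L^\infty}$ is small (the first alternative of the theorem), or when $\nabla\log\gamma$ admits a decomposition into a smooth part (handled by the classical Sylvester--Uhlmann argument) plus a residual piece of arbitrarily small $L^\infty$-norm (handled by the small-data argument), which is exactly the case when $\gamma\in C^1$. To obtain the required $X^{s,b}_\zeta$-bounds I would \emph{average} over the $(d-2)$-parameter family of admissible $\zeta$ with fixed sum $\zeta_1+\zeta_2=-i\xi$: the surface $\Sigma_\zeta$ varies with $\zeta$, so $\widehat{\nabla\log\gamma}$ concentrates near $\Sigma_\zeta$ only on an exceptional set of $\zeta$, and a pigeonhole produces a single $\zeta$ along which $\|\psi_\zeta\|_{X^{s,b}_\zeta}$ is as small as desired. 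Plugging the CGO solutions into the integral identity, using $\zeta_1\cdot\zeta_2=-\tfrac12|\xi|^2$, and expanding
\[
\nabla u_1\cdot\nabla u_2=e^{-ix\cdot\xi}\bigl(\zeta_1\cdot\zeta_2+\text{corrections}\bigr),
\]
one passes to the limit $|\zeta_j|\to\infty$ to obtain $|\xi|^2\,\widehat{(\gamma_1-\gamma_2)}(\xi)=0$ for every $\xi\ne0$; compact support of $\gamma_1-\gamma_2$ then forces $\gamma_1=\gamma_2$.
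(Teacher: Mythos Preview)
Your sketch captures the two essential novelties of the paper correctly: replacing weighted $L^2$ spaces by Bourgain-type norms $\norm{\abs{p_\zeta}^{b}\hat u}_{L^2}$ adapted to the characteristic set of $\Delta_\zeta$, and \emph{averaging} over the free parameters in $\zeta$ (the paper integrates over $s\in[\lambda,2\lambda]$ and over one angular direction $\eta_1\in S^1\sub P$) to extract a sequence along which the forcing is small. The smallness/$C^1$ dichotomy via an approximation $f=f_\epsilon+(f-f_\epsilon)$ is also exactly the paper's mechanism.

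Where you diverge is in working directly with $L_\gamma$ rather than passing to the Schr\"odinger equation $(-\Delta+q)v=0$ via $v=\gamma^{1/2}u$, $q=\gamma^{-1/2}\Delta\gamma^{1/2}$. In the paper the CGO equation becomes $\Delta_\zeta\psi=q(1+\psi)$: both the forcing and the perturbation are the \emph{same} zeroth-order operator $m_q$, and since $q=\nabla\cdot(\nabla\log g)+\abs{\nabla\log g}^2$ the forcing is exactly of the form treated by the averaged estimate (Lemma~\ref{averageestimate}), while the bilinear form $\jap{m_qu,v}$ is symmetric and reduces to $\int\nabla\log g\cdot\nabla(uv)$ plus lower order. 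In your formulation the forcing is $-\zeta\cdot\nabla\log\gamma$ and the perturbation carries a first-order piece $\nabla\log\gamma\cdot\nabla$; one must then verify that the averaged $\dot X^{-1/2}_\zeta$ bound still holds with the weight $\abs{\zeta\cdot\xi}$ in place of $\abs{\xi}$, and that the bilinear estimate survives the first-order term. This is plausible (note $\zeta\cdot\nabla=\tfrac12(\Delta_\zeta-\Delta)$, so your forcing differs from $\tfrac12\Delta\log\gamma$ by a term one can absorb into the ansatz---which is exactly what the substitution $v=\gamma^{1/2}u$ does), but the Schr\"odinger reduction is precisely what makes these checks automatic. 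Conversely, if your direct route closes, you read off $\abs{\xi}^2\widehat{(\gamma_1-\gamma_2)}(\xi)=0$ immediately and avoid the separate algebraic step (Lemma~\ref{gaidentity}) the paper uses to pass from $q_1=q_2$ back to $\gamma_1=\gamma_2$.

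One genuine gap: you cannot ``extend each $\gamma_i$ by $1$ outside $\Omega$'', since that is not Lipschitz unless $\gamma_i\equiv 1$ on $\pd\Omega$. The paper invokes Alessandrini's boundary determination result to conclude $\gamma_1|_{\pd\Omega}=\gamma_2|_{\pd\Omega}$, then extends both so that $\gamma_1=\gamma_2$ on $\R^d\setminus\Omega$ and $\equiv 1$ outside a large ball, preserving the $W^{1,\infty}$ (respectively $C^1$) regularity and the smallness of $\nabla\log\gamma_i$. (Your side condition ``matching Dirichlet data'' in the Alessandrini identity is also unnecessary; the identity holds for arbitrary solutions $u_i$ of $L_{\gamma_i}u_i=0$ once $\Lambda_{\gamma_1}=\Lambda_{\gamma_2}$.)
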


The basic approach to this problem in this paper is the method
introduced by Sylvester and Uhlmann in~\cite{Sylvester1986} based on
the ideas in~\cite{CalderonAlbertoP.}. Kohn and
Vogelius~\cite{Kohn1984} showed that for smooth conductivities, the
map $\gamma \to \Lambda_\gamma$ determines the values of $\gamma$ and
all of its derivatives on $\pd \Omega$. This was improved to Lipschitz
conductivities in domains with Lipschitz boundary by Alessandrini
in~\cite{Alessandrini1990}. Using this result, we may reduce the
inverse problem for the conductivity equation to an inverse problem
for the Schr\"odinger equation $(-\Delta + q)v=0$, where the potential
$q$ is defined by $q = \gamma^{-1/2} \Delta \gamma^{1/2}$. If $u$ is a
solution to the equation $L_\gamma u = 0$, then $v=\gamma^{1/2} u$
satisfies $(-\Delta + q)v =0$. The corresponding Dirichlet-to-Neumann
map is defined by
\[\Lambda_q(f) := \eval{\frac{\pd v_f}{\pd \nu}}_{\pd \Omega},\]
where $v_f$ is now a solution to $(-\Delta + q)v = 0$ with boundary
data $f$. For $q$ corresponding to a $C^2$ conductivity, at least,
this map is well-defined. If $\gamma_1$ and $\gamma_2$ satisfy
$\Lambda_{\gamma_1}=\Lambda_{\gamma_2}$, then by the boundary
identification result we have $\Lambda_{q_1}=\Lambda_{q_2}$ for $q_i =
\gamma_i^{-1/2}\Delta \gamma_i^{1/2}$, where $\Lambda_q$ is the
Dirichlet-to-Neumann map $f \mapsto \pd v_f/\pd \nu$ corresponding to
the equation $(-\Delta + q)v = 0$. Now, if $u$ satisfies $(-\Delta +
q)u = 0$ inside $\Omega$ in the weak sense, then for any $v$ we have
\begin{align*}
0&=\int_{\Omega} (-\Delta + q)u v dx \\
& = \int_{\Omega} (\nabla u \cdot \nabla v + quv) dx - \int_{\pd \Omega} \frac{\pd u}{\pd \nu} v d\sigma.
\end{align*}
Hence if $\eval{u}_{\pd \Omega} = f$ and $\eval{v}_{\pd \Omega} = g$, we have
\begin{align*}
(\Lambda_q f,g)_{L^2(\pd \Omega)} & = \int_{\pd \Omega} \frac{\pd u}{\pd \nu} v d\sigma \\
& = \int_{\Omega} (\nabla u \cdot \nabla v + quv) dx.
\end{align*}
In particular, if $\Lambda_{q_1}=\Lambda_{q_2}$ and $(-\Delta + q_i)u_i = 0$, then a simple calculation shows that
\[\int_{\Omega} (q_1-q_2) u_1u_2 dx =0.\]
Using boundary determination, it is possible to extend the $\gamma_i$ to functions in $W^{1,\infty}(\R^d)$ (or $C^1$) so that $\gamma_1 = \gamma_2$ on $\R^d - \Omega$. Given this, we can extend the domain of integration to all of $\R^d$, and obtain
\[\int_{\R^d} (q_1-q_2) u_1u_2 dx = 0.\]
From this discussion, it follows that one way to show that the potentials $q_1$ and $q_2$ coincide is to produce enough solutions to the corresponding Schr\"odinger equations that their products are dense in some sense. This idea goes back to the original paper of Calder\'on~\cite{CalderonAlbertoP.}. In~\cite{Sylvester1987}, Sylvester and Uhlmann proved a uniqueness result for $C^2$ conductivities by constructing complex geometrical optics solutions of the form $u_i = e^{x\cdot \zeta_i}(1 + \psi_i)$. Here the $\zeta_i\in \C^d$ are chosen so that $\zeta_i \cdot \zeta_i = 0$, so that $e^{x\cdot \zeta_i}$ is harmonic, and $e^{x\cdot \zeta_1} e^{x\cdot \zeta_2} = e^{ix\cdot k}$ for some fixed frequency $k\in \R^d$. In three or more dimensions, these conditions give sufficient freedom that is possible to choose an infinite family of pairs $\zeta_1,\zeta_2$ with $\abs{\zeta_i} \to \infty$. This in turn ensures that the remainders $\psi_i$ decay to zero in some sense as $\abs{\zeta_i} \to \infty$, so that the product $u_1 u_2$ converges to $e^{ix\cdot k}$. Uniqueness then follows from Fourier inversion.

To construct these CGO solutions, fix $\zeta \in \C^d$ such that $\zeta \cdot \zeta = 0$, and note that $e^{-x\cdot \zeta} \Delta( e^{x\cdot \zeta}\psi) = (\Delta + 2\zeta \cdot \nabla) \psi$. Thus $u = e^{x\cdot \zeta}(1+\psi)$ solves $\Delta u = qu$ if
\begin{equation}
\label{cgoequation}
\Delta_\zeta \psi := \Delta \psi + 2 \zeta \cdot \nabla \psi = q (1+\psi).
\end{equation}
Let $m_q$ be the map sending $\psi$ to $q\psi$. We will treat this equation perturbatively, by viewing $\Delta_\zeta - m_q$ as a pertubation of $\Delta_\zeta$. The operator $\Delta_\zeta$ has a right inverse defined by 
\[\widehat{\Delta_\zeta^{-1} f}(\xi) = p_\zeta(\xi)^{-1} \hat f(\xi),\]
where
\[p_\zeta(\xi) = -\abs{\xi}^2 + 2i\zeta \cdot \xi.\]
To construct a solution to~\eqref{cgoequation} using a fixed-point argument, we need to bound the operators $\Delta_\zeta^{-1}$ and $m_q$ in some iteration spaces. Sylvester and Uhlmann~\cite{Sylvester1987} showed that
\begin{equation}
\label{inferiorestimate}
\norm{\Delta^{-1}_\zeta}_{L^2_{\delta+1}\to L^2_\delta} \ll \abs{\zeta}^{-1},
\end{equation}
where $-1<\delta<0$ and $\norm{u}_{L^2_\delta} = \norm{\jap{x}^\delta u}_{L^2}$. For $\gamma \in C^2$, we have $q \in L^\infty$, and the bound $\norm{m_q}_{L^2_\delta \to L^2_{\delta + 1}} \ll_q 1$ is trivial. Combining these two bounds closes the iteration argument, showing that the CGO solutions exist and that the remainder $\psi$ goes to zero in some suitable sence as $\abs{\zeta} \to \infty$. If $\gamma$ does not have two derivatives, then it is possible to salvage this argument by viewing $q$ as having negative regularity. Brown~\cite{Brown1996} used the estimate~\eqref{inferiorestimate} to derive a bound for $\Delta^{-1}_\zeta$ on certain weighted Besov spaces of negative order. Combined with a corresponding bound for $m_q$, this gives uniqueness under the assumption of $3/2 + \epsilon$ derivatives. The general outline and much of the notation in this paper will follow Brown, and the main focus will be on improving the estimates. Uniqueness for conductivities is $W^{3/2,\infty}$ was shown in~\cite{Paivarinta2003} using a much more involved approximation argument; this result was later improved to $W^{3/2,p}$ (with $p > 2n$) in~\cite{Brown2003}. We will use a simpler approximation argument to obtain the results for $C^1$ and Lipschitz conductivities. At this regularity there are only partial results in the literature; for example see~\cite{Greenleaf2003}, which establishes global uniqueness for certain conductivities in $C^{1+\epsilon}$, and~\cite{Kim2008}, which establishes global uniqueness for Lipschitz conductivities that are piecewise smooth across polyhedral boundaries. 

The first main idea in this paper is to use an iteration space that is adapted to the structure of the equation~\eqref{cgoequation}. In the spirit of Bourgain's $X^{s,b}$ spaces~\cite{Bourgain1993}, we define spaces $\dot X^b_\zeta$ by the norm
\[\norm{u}_{\dot X^b_\zeta} = \norm{\abs{p_\zeta(\xi)}^{b} \hat u(\xi)}_{L^2},\]
where $p_\zeta(\xi) = -\abs{\xi}^2 + 2i\zeta \cdot \xi$ is the symbol of $\Delta_\zeta$. In our analysis, we will only need the spaces $\dot X^{1/2}_\zeta$ and $\dot X^{-1/2}_\zeta$. It is easy to see that $\norm{\Delta^{-1}_\zeta}_{\dot X_\zeta^{-1/2} \to \dot X_\zeta^{1/2}} =1$. We will also make use of the inhomogeneous spaces $X_\zeta^{b}$ with norm
\[\norm{u}_{X_\zeta^{b}} = \norm{(\abs{\zeta} + \abs{p_\zeta(\xi)})^{b} \hat u(\xi)}_{L^2}.\]
For a function $\gamma \in W^{1,\infty}(\R^d)$ that is constant outside a compact set, we set $g = \gamma^{1/2}$, and formally define the associated potential $q = g^{-1} \Delta g$. Following~\cite{Brown1996}, we define the ``multiplication by $q$'' map by duality, i.e.
\[\jap{m_q(u),v} = -\int \nabla g \cdot \nabla (g^{-1} u v) dx.\]
We will use the fact that $q$ is compactly supported in an essential
way. It is clear that if $\phi_B$ is a smooth compactly supported
function with $\phi_B = 1$ on a ball containing the support of $q$,
then $\jap{m_q(u),v} = \jap{m_q(u_B),v_B}$, where $u_B = \phi_B u$ and
$v_B = \phi_B v$. By the uncertainty principle, multiplication by the
cutoff $\phi_B$ should smooth things out on the unit scale in Fourier
space. Heuristically, this means that the growth at infinity will be
unchanged, but concentrations of mass (for example near the
characteristic set of $\Delta_\zeta$) should be smoothed out. Thus
$\nabla u_B$ should be comparable to $\Delta_\zeta^{1/2} u$ at
sufficiently high frequencies, since $\abs{p_{\zeta}(\xi)}^{1/2} \sim
\abs{\xi}$ when $\abs{\xi} \ggg \abs{\zeta}$. At low frequencies,
$p_\zeta(\xi) \sim \abs{\zeta}d(\xi,\Sigma_\zeta)$, where
$\Sigma_\zeta$ is the zero set of $p_\zeta$. When we smooth things out
on the unit scale, we have the heuristic $p_\zeta(\xi) \gg
\abs{\zeta}$, so $u_B$ should be bounded by
$\abs{\zeta}^{-1}\Delta_{\zeta}^{1/2} u$. By these types of
considerations, we will obtain the bound
\[\norm{m_q}_{\dot X^{1/2}_\zeta \to \dot X_\zeta^{-1/2}} \ll_{\gamma} 1,\]
where the implied constant is small for large $\zeta$ as long as
$\norm{\log \gamma}_{W^{1,\infty}}$ is small or $\gamma \in C^1$. This
bound will come into play in two ways. First, we will use it to make
the iteration argument work and produce CGO solutions with remainder
terms $\psi_i$ decaying in $\dot X^{1/2}_{\zeta_i}$. Second, a
bilinear version of this bound will help establish that $\int
qe^{ix\cdot k} \psi_1\psi_2 dx \to 0$ as $\abs{\zeta_i} \to \infty$.

In order to show that the remainder $\psi$ goes to zero as
$\abs{\zeta}\to \infty$, we need to have $\norm{q}_{\dot
  X^{-1/2}_\zeta} \to 0$ as $\abs{\zeta} \to \infty$. Unfortunately,
if $\gamma$ is merely in $W^{1+\theta,\infty}$, the obvious estimates
only give $\norm{q}_{\dot X^{-1/2}_\zeta} \ll
\abs{\zeta}^{1/2-\theta}$, which would require $\theta \geq 1/2$ in
order to get decay. The second main idea in this paper is to establish
the estimate $\norm{q}_{\dot X^{-1/2}_\zeta} \to 0$ {\em on average}
(where the average is taken over suitable values of $\zeta$), which
suffices for our purposes since we only require a sequence
$\zeta^{(n)}$ growing to infinity for which our estimates hold.

{\bf Acknowledgments:} The authors are grateful to Gunther Uhlmann 
for introducing them to this problem and for many useful discussions.

\section{Localization estimates}
To exploit the fact that the $q_i$ are compactly supported, we will write $m_{q_i}(u)$ as $m_{q_i}(\phi_B u)$, where $\phi_B$ is a Schwartz cutoff function that is equal to one on an open ball $B$ containing the supports of the $q_i$. This leads us to bound the map $u \mapsto \phi_B u$ with respect to various norms.

We now pass to a somewhat more general framework. Let $v$ and $w$ be two weights on $\R^d$. Defining $Tf = \phi * f$, where $\phi$ is a rapidly decreasing function, we would like to find sufficient conditions for $\norm{T}_{L^2_v \to L^2_w}$ to be bounded. This is equivalent to bounding $\norm{Sf}_{L^2\to L^2}$, where $Sf = w(\xi)^{1/2} \phi * (v(\xi)^{-1/2} f)$.

We prove the following lemma:
\begin{lemma}
Let $v$ and $w$ be nonnegative weights defined on $\R^d$. If $\phi$ is a fixed rapidly decreasing function, then
\[\norm{\phi*f}_{L^2_w} \ll_\phi \min\left\{\sup_{\xi} \sqrt{\int J(\xi,\eta) d\eta},\sup_{\eta} \sqrt{\int J(\xi,\eta)d\xi}\right\}\norm{f}_{L^2_v},\]
where 
\[J(\xi,\eta) = \abs{\phi(\xi-\eta)} \frac{w(\xi)}{v(\eta)}.\]
\end{lemma}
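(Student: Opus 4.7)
The plan is to follow the reduction spelled out in the paragraph preceding the lemma: the stated bound is equivalent to the $L^2\to L^2$ boundedness of the integral operator $S$ with kernel $K(\xi,\eta) = w(\xi)^{1/2}\phi(\xi-\eta) v(\eta)^{-1/2}$, and the two branches of the minimum correspond to the two natural ways of distributing the factor $|\phi(\xi-\eta)|$ when Cauchy--Schwarz in $\eta$ is applied to $Sf(\xi)$. In both branches the implied constant will turn out to be $\norm{\phi}_{L^1}^{1/2}$, which is finite because $\phi$ is rapidly decreasing.

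For the $\sup_\xi$ branch, I would split the integrand as the product of $w(\xi)^{1/2}|\phi(\xi-\eta)|^{1/2}v(\eta)^{-1/2}$ (carrying both weights) and $|\phi(\xi-\eta)|^{1/2}f(\eta)$ (up to a phase). Cauchy--Schwarz in $\eta$ then gives
\[|Sf(\xi)|^2 \leq \left(\int J(\xi,\eta)\,d\eta\right)\int |\phi(\xi-\eta)||f(\eta)|^2\,d\eta,\]
since $w(\xi)|\phi(\xi-\eta)|v(\eta)^{-1} = J(\xi,\eta)$. Integrating in $\xi$ and applying Fubini to the last factor (using $\int |\phi(\xi-\eta)|\,d\xi = \norm{\phi}_{L^1}$) bounds $\norm{Sf}_{L^2}^2$ by $\norm{\phi}_{L^1}\bigl(\sup_\xi \int J(\xi,\eta)\,d\eta\bigr)\norm{f}_{L^2}^2$.

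For the $\sup_\eta$ branch, I would instead put the weights on the \emph{same} side of the splitting as $f$, writing the integrand as $w(\xi)^{1/2}|\phi(\xi-\eta)|^{1/2}$ times $|\phi(\xi-\eta)|^{1/2}v(\eta)^{-1/2}f(\eta)$. Cauchy--Schwarz in $\eta$ yields $|Sf(\xi)|^2 \leq \norm{\phi}_{L^1}\int J(\xi,\eta)|f(\eta)|^2\,d\eta$, and after integrating in $\xi$ and swapping the order of integration, the complementary bound $\norm{\phi}_{L^1}\bigl(\sup_\eta\int J(\xi,\eta)\,d\xi\bigr)\norm{f}_{L^2}^2$ falls out. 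Taking the smaller of the two estimates and then extracting square roots yields the lemma. This is essentially Schur's test, and there is no real obstacle; the only mildly delicate point is to arrange the Cauchy--Schwarz splitting so that the weights $w,v$ end up inside whichever factor eventually reconstructs $J$, rather than inside the factor whose integral over $\xi$ or $\eta$ collapses to $\norm{\phi}_{L^1}$.
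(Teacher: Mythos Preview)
Your proof is correct and is essentially the same Schur-test argument as the paper's: both reduce to bounding the integral operator $S$ with kernel $w^{1/2}\phi(\xi-\eta)v^{-1/2}$, split off one factor of $|\phi|^{1/2}$, and collapse it to $\norm{\phi}_{L^1}$. The only cosmetic difference is that the paper expands $|Sf(\xi)|^2$ as a double integral in $\eta,\zeta$ and uses $ab\le\tfrac12(a^2+b^2)$ (then symmetry in $\eta,\zeta$) to obtain the $\sup_\eta$ branch, and then passes to the adjoint $S^*$ to get the $\sup_\xi$ branch; you instead obtain both branches directly by choosing the Cauchy--Schwarz splitting appropriately, which is slightly cleaner since it avoids the adjoint computation.
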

\begin{proof}
We can write
\[\norm{Sf}_{L^2}^2 = \int \sps{\int \phi(\xi-\eta) v(\eta)^{-1/2} f(\eta) d\eta}\bar{\sps{\int \phi(\xi-\zeta) v(\zeta)^{-1/2} f(\zeta) d\zeta}} w(\xi) d\xi.\]
Applying the inequality $ab \leq \frac 1 2 (a^2 + b^2)$ we have
\[\norm{Sf}_{L^2}^2 \leq \iiint \abs{\phi(\xi-\eta) \phi(\xi-\zeta)} v(\eta)^{-1} \abs{f(\eta)}^2 w(\xi) d\eta d\zeta d\xi. \]
Integrating first in $\zeta$, we find that
\[\norm{Sf}_{L^2}^2 \ll \iint \abs{\phi(\xi-\eta)} \frac{w(\xi)}{v(\eta)} \abs{f(\eta)}^2 d\eta d\xi,\]
Equivalently, we may bound the adjoint $S^*$. To describe the adjoint, we compute
\begin{align*}
\jap{Sf,g} & = \int \sps{\int \phi(\xi-\eta) v(\eta)^{-1/2} f(\eta) d\eta} w(\xi)^{1/2} \bar{g(\xi)} d\xi\\
& = \int f(\eta) \bar{\sps{\int \bar{\phi(\xi-\eta)} g(\xi) \frac{ w(\xi)^{1/2}}{ v(\eta)^{1/2}} d\xi}} d\eta\\
& = \jap{f,S^*g},
\end{align*}
so that
\[S^*g(\eta) = \int \bar{\phi(\xi-\eta)} g(\xi) \frac{w(\xi)^{1/2}}{ v(\eta)^{1/2}}d\xi. \]
This means that bounding $T$ from $L_v$ to $L_w$ is the same as bounding $T^*g = \bar{\phi(-\xi)}*g$ from $L_{w^{-1}}^2$ to $L_{v^{-1}}^2$. To do this it suffices to show that
\[\int \abs{\phi(\xi-\eta)}\frac{w(\xi)}{v(\eta)} d\eta = \int J(\xi,\eta) d\eta\]
is uniformly bounded.
\end{proof}

Let $\zeta \in \C^d$ be such that $\zeta \cdot \zeta = 0$. Write $\zeta = s(e_1 - ie_2)$, with $e_1, e_2\in \R^d$ satisfying $e_1 \cdot e_2 = 0$, and define $H$ and $L$ to be projection operators onto high and low frequencies, given by $\widehat{Lu}(\xi) = \chi(\xi/(8s)) \hat u(\xi)$ and $\widehat{Hu}(\xi) = (1-\chi(\xi/(8s)))\hat u(\xi)$, where $\chi$ is a smooth cutoff function supported in $B(0,2)$ for which $\eval{\chi}_{B(0,1)} = 1$. We establish the following estimates for the localizations:
\begin{lemma}
\label{localizationlemma}
Let $\phi_B$ be a fixed Schwartz function, and write $u_B = \phi_B u$. Then the following estimates hold (with constants dependending on $\phi_B$):
\begin{align}
\label{singularity}
\norm{u_B}_{\dot X^{-1/2}_\zeta} &\ll \norm{u}_{X_\zeta^{-1/2}}\\
\label{adjointsingularity}
\norm{u_B}_{X^{1/2}_\zeta} &\ll \norm{u}_{\dot X^{1/2}_\zeta}\\
\label{lfestimate}
\norm{u_B}_{L^2} & \ll s^{-1/2} \norm{u}_{\dot X^{1/2}_\zeta}\\
\label{hfgradestimate}
\norm{\nabla(Hu_B)}_{L^2} &\ll \norm{u}_{\dot X^{1/2}_\zeta}\\
\label{hfestimate}
\norm{Hu_B}_{L^2} & \ll s^{-1} \norm{u}_{\dot X^{1/2}_\zeta},
\end{align}
\end{lemma}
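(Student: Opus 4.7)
Every bound in the lemma concerns multiplication by the Schwartz cutoff $\phi_B$, which on the Fourier side is convolution with $\hat\phi_B$. My plan is to reduce each estimate to an application of the preceding lemma with appropriate weights $v,w$ and then verify the required uniform integrability of $J(\xi,\eta)=\abs{\hat\phi_B(\xi-\eta)}w(\xi)/v(\eta)$ using the pointwise structure of $p_\zeta$. Writing $\xi = \xi^{(1)}e_1 + \xi'$ with $\xi^{(1)} = e_1\cdot\xi$, one computes $p_\zeta(\xi) = -\abs{\xi}^2 + 2se_2\cdot\xi + 2is\xi^{(1)}$, so the characteristic set $\Sigma_\zeta = \brk{p_\zeta=0}$ is the codimension-two sphere $\brk{\xi^{(1)}=0,\ \abs{\xi'-se_2}=s}$, and
\[
\abs{p_\zeta(\xi)} \sim s\,d(\xi,\Sigma_\zeta) \text{ near } \Sigma_\zeta, \qquad \abs{p_\zeta(\xi)} \sim \abs{\xi}^2 \text{ for } \abs{\xi} \ggg s.
\]
The pointwise fact I will use repeatedly is that, since $\Sigma_\zeta$ has codimension two, integrating $\abs{p_\zeta(\eta)}^{-1}$ against $\abs{\hat\phi_B(\xi-\eta)}$ gives at most $\ll s^{-1}$ uniformly in $\xi$: polar coordinates in the two normal directions turn $\abs{p_\zeta}^{-1}\sim (sr)^{-1}$ into a convergent $s^{-1}\int_0^1 dr$, while the rapid decay of $\hat\phi_B$ absorbs the contribution from far away.

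For \eqref{singularity}, I apply Lemma~2.1 with $w(\xi)=\abs{p_\zeta(\xi)}^{-1}$ and $v(\eta)=(\abs{\zeta}+\abs{p_\zeta(\eta)})^{-1}$, and bound
\[
\sup_\eta \int J(\xi,\eta)\,d\xi = \sup_\eta (\abs{\zeta}+\abs{p_\zeta(\eta)})\int \abs{\hat\phi_B(\xi-\eta)}\,\abs{p_\zeta(\xi)}^{-1}\,d\xi.
\]
When $\eta$ is near $\Sigma_\zeta$ the prefactor is $\sim\abs{\zeta}\sim s$ and the integral is $\ll s^{-1}$ by the pointwise fact; when $\eta$ is far from $\Sigma_\zeta$ the prefactor $\abs{p_\zeta(\eta)}$ is comparable to $\abs{p_\zeta(\xi)}$ on the convolution support. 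Estimate \eqref{adjointsingularity} is the dual of \eqref{singularity}: multiplication by the real Schwartz function $\phi_B$ is self-adjoint in the Plancherel pairing, and $\dot X^{\pm 1/2}_\zeta$, $X^{\pm 1/2}_\zeta$ are each other's duals, so transposing $M_{\phi_B}\colon X^{-1/2}_\zeta \to \dot X^{-1/2}_\zeta$ yields the second bound.

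For \eqref{lfestimate} I take $w=1$ and $v=s^{-1}\abs{p_\zeta}$; the required $\sup_\xi s\int \abs{\hat\phi_B(\xi-\eta)}\abs{p_\zeta(\eta)}^{-1}d\eta\ll 1$ is again the pointwise fact. For \eqref{hfgradestimate} I use that $1-\chi(\xi/8s)$ is supported in $\brk{\abs{\xi}\geq 8s}$, where $\abs{\xi}^2\ll\abs{p_\zeta(\xi)}$, so
\[
\norm{\nabla(Hu_B)}_{L^2}^2 \ll \int \abs{p_\zeta(\xi)}(1-\chi(\xi/8s))^2\,\abs{\widehat{u_B}(\xi)}^2\,d\xi,
\]
reducing to a weighted convolution bound with $w(\xi)=\abs{p_\zeta(\xi)}(1-\chi(\xi/8s))^2$ and $v(\eta)=\abs{p_\zeta(\eta)}$. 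Here $w/v$ is bounded except when $\xi$ is large and $\eta$ is near $\Sigma_\zeta$, where $w(\xi)/v(\eta)\sim \abs{\xi}^2/(s\,d(\eta,\Sigma_\zeta))$ grows only polynomially and is dominated by the Schwartz decay of $\hat\phi_B(\xi-\eta)$. Finally, \eqref{hfestimate} follows from \eqref{hfgradestimate} and Plancherel, since $\widehat{Hu_B}$ is supported where $\abs{\xi}\ggg s$, giving $\norm{Hu_B}_{L^2}\ll s^{-1}\norm{\nabla(Hu_B)}_{L^2}$.

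The main obstacle throughout is the geometric analysis near the codimension-two characteristic sphere $\Sigma_\zeta$: every bound is scaling-critical, and the correct power of $s$ must be extracted from a careful decomposition of the integration region into a unit-scale neighborhood of $\Sigma_\zeta$ (where the key integrability of $(sr)^{-1}$ in two normal directions produces the crucial factor $s^{-1}$) and its complement (where the Schwartz tail of $\hat\phi_B$ does the work).
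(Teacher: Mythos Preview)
Your approach is essentially the same as the paper's: reduce to the preceding Schur-type lemma and exploit that $\abs{p_\zeta}^{-1}$ is locally integrable across the codimension-two characteristic set. Your ``pointwise fact'' is exactly the paper's key bound
\[
\int \jap{\xi-\eta}^{-M}\,d(\xi,\Sigma_\zeta)^{-1}\,d\xi \ll 1,
\]
rephrased as $\int \abs{\hat\phi_B(\xi-\eta)}\,\abs{p_\zeta(\xi)}^{-1}\,d\xi \ll s^{-1}$ via $\abs{p_\zeta}\sim s\,d(\cdot,\Sigma_\zeta)$ near $\Sigma_\zeta$.

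The organizational difference is that the paper, having established \eqref{singularity} and \eqref{adjointsingularity}, derives \eqref{lfestimate}, \eqref{hfgradestimate}, \eqref{hfestimate} as one-line corollaries of \eqref{adjointsingularity} together with the trivial pointwise facts $s \leq \abs{p_\zeta(\xi)}+s$ and, on $\{\abs{\xi}\geq 8s\}$, $\abs{\xi}^2 \ll \abs{p_\zeta(\xi)} \leq \abs{p_\zeta(\xi)}+s$ and $s^2 \ll \abs{p_\zeta(\xi)}+s$. You instead reapply the Schur lemma separately for each; this is fine but forces you to redo the same case analysis several times.

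Two places where your sketch should be tightened. First, in \eqref{singularity} the sentence ``when $\eta$ is far from $\Sigma_\zeta$ the prefactor $\abs{p_\zeta(\eta)}$ is comparable to $\abs{p_\zeta(\xi)}$ on the convolution support'' ignores the Schwartz tail: when $\eta$ is far from $\Sigma_\zeta$ but $\xi$ (in the $d\xi$-integral) lands near $\Sigma_\zeta$, one needs the triangle inequality $d(\eta,\Sigma_\zeta)\leq d(\xi,\Sigma_\zeta)+\abs{\xi-\eta}$ and then the pointwise fact applied to $\abs{\hat\phi_B(\xi-\eta)}\jap{\xi-\eta}$, exactly as the paper does. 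Second, for \eqref{hfgradestimate} the claim that ``$w/v$ is bounded except when $\xi$ is large and $\eta$ is near $\Sigma_\zeta$'' is not literally true (take $\abs{\xi}\ggg\abs{\eta}\ggg s$); what is true is that neither $\sup_\eta\int J\,d\xi$ nor $\sup_\xi\int J\,d\eta$ is obviously finite without splitting, and you must use that on $\supp(1-\chi(\cdot/8s))$ one has $\abs{\xi-\eta}\gtrsim\abs{\xi}$ whenever $\eta$ is in the low-frequency ball, so the Schwartz decay kills the polynomial loss. The paper avoids this by passing through $\norm{u_B}_{X^{1/2}_\zeta}$.
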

\begin{proof}
We write $\zeta = s(e_1 - ie_2)$, where $e_1,e_2$ are unit length and orthogonal to each other, and extend to a basis $e_1,\dotsc,e_d$ of $\R^d$. We write 
\[p(\xi) = -\abs{\xi}^2 + 2 i \xi \cdot s(e_1 - ie_2) = (s^2-\abs{\xi - se_2}^2) + 2is\xi_1.\]

From this expression, we see that the symbol $p$ vanishes simply on a hypersurface of codimension two, namely the intersection of the sphere $\abs{\xi-se_2} = s$ with the plane $\xi_1 = 0$. Let $\Sigma_\zeta$ denote the set of points where $p(\xi) = 0$, and let $\Sigma_{\zeta,\delta}$ denote the points whose distance from $\Sigma_\zeta$ is at most $\delta$. Here $\delta$ is a small fixed number independent of $s$.

It is clear that $\abs{p(\xi)} \sim \abs{\xi}^2$ for large $\xi$. More precisely, if $\abs{\xi} \geq 8s$, then $\abs{p(\xi)+\abs{\xi}^2} \ll 4s\abs{\xi} \ll \abs{\xi}^2/2$, which implies that $\abs{\xi}^2/2 \leq \abs{p(\xi)} \leq 3\abs{\xi}^2/2$.

On the other hand, if we fix a constant $M$, then for $\abs{\xi} \leq Ms$ we have $\abs{p(\xi)} \sim_M sd(\xi,\Sigma_\zeta)$. It is easy to see that 
\[d(\xi, \Sigma_\zeta) \sim \abs{s - \abs{\xi-se_2}} + \abs{\xi_1}.\]
Hence
\begin{align*}
\abs{p(\xi)} & = \sqrt{((s+\abs{\xi-se_2})(s - \abs{\xi-se_2}))^2 + (2s\abs{\xi_1})^2} \\
& \sim_M s (\abs{s - \abs{\xi-se_2}} + \abs{\xi_1})\\
& \sim s d(\xi,\Sigma_\zeta).
\end{align*}

We are now ready to prove the estimates in the lemma. The main point is to analyze the behavior of the symbol $p$ near the characteristic set $\Sigma_\zeta$ (cf.~\cite{Sylvester1987,Greenleaf2001a} where a similar analysis is carried out). We first establish~\eqref{singularity}, after which the rest of the estimates will follow easily. The estimate~\eqref{singularity} simply reflects the fact that the inhomogeneous $X^{-1/2}_\zeta$ norm is a blurry version of the $\dot X^{-1/2}_\zeta$ norm, where in particular the integrable singularity that arises on the zero set of $p_\zeta$ is smoothed out. To make this precise, we first show that 
\begin{equation}
\label{singbound}
\int \jap{\xi-\eta}^{-M} \frac{1}{d(\xi,\Sigma_\zeta)} d\xi \ll 1.
\end{equation}
This is true for any nice codimension 2 hypersurface $\Sigma_\zeta$, as can easily be seen by using a partition of unity and flattening out the surface. We want to show that this holds independently of $\zeta$, which will be true, roughly speaking, because the surface $\Sigma_\zeta$ only gets flatter as $s \to \infty$. For our purposes it will suffice to treat the case at hand. We split the integral as $\int_{\Sigma_{\zeta,1}} + \int_{\R^d - \Sigma_{\zeta,1}}$. Since $d(\xi,\Sigma_\zeta) \geq 1$ outside $\Sigma_{\zeta,1}$, it is clear that $\int_{\R^d-\Sigma_{\zeta,1}} \ll 1$. It remains to treat the integral over $\Sigma_{\zeta,1}$. Write $\xi = (\xi_1,\xi')$, and pass to polar coordinates in $\xi'$ centered at $(se_2)'$. Then for $\xi = \xi_1e_1 + se_2 + r\omega$ and $\eta = \eta_1e_1 + se_2 + t\nu$, we have $\jap{\xi-\eta}^{-M} \ll \jap{r\omega-t\nu}^{-M} \ll \jap{r\omega - r\nu}^{-M}$ (by the elementary inequality $\abs{r\omega - t \nu} \geq \frac 1 2 \sqrt{r^2+t^2}\abs{\omega-\nu}$.) Also, note that $d(\xi,\Sigma_\zeta) \gg \abs{r-s} + \abs{\xi_1}$, so that 
\[\int_{\Sigma_{\zeta,1}} \ll \int_{S^{n-2}} \int_{-1}^{1} \int_{s-1}^{s+1} \jap{r\omega - r\nu}^{-M} (\abs{r-s} + \abs{\xi_1})^{-1} r^{n-2} dr d\xi_1 d\omega. \]
Since $(\abs{r-s} + \abs{\eta_1})^{-1}$ is integrable with respect to $dr d\eta_1$,
\[\int_{\Sigma_{\zeta,1}} \ll (s+1)^{n-2} \int_{S^{n-2}} \jap{(s-1)\omega - (s-1)\nu}^{-M} d\omega.\]
The quantity $s^{n-2} \int_{S^{n-2}} \jap{s\omega - s\nu}^{-M} d\omega$ is uniformly bounded in $s$, so $\int_{\Sigma_{\zeta,1}} \ll 1$.

In order to prove~\eqref{singularity} and the adjoint estimate~\eqref{adjointsingularity}, we need to show that
\[\int \abs{\phi(\xi-\eta)} \frac{\abs{p(\eta)} + s}{\abs{p(\xi)}} d\xi \ll 1.\]
We split this into two integrals $\int_{\abs{\xi} > 100s}$ and $\int_{\abs{\xi} \leq 100s}$. First we estimate $\int_{\abs{\xi} > 100s}$. Here $\abs{p(\xi)} \sim \abs{\xi}^2$, so 
\[\int_{\abs{\xi}>100s} \ll \int_{\abs{\xi}>100s} \abs{\phi(\xi-\eta)}\frac{\abs{p(\eta)}+s}{\abs{\xi}^2} d\xi.\]
When $\abs{\eta} > 8s$, we have $\abs{p(\eta)} \ll \abs{\eta}^2 \ll \abs{\xi}^2 + \abs{\xi-\eta}^2$, so it is easy to see that the integral is bounded in $s$. On the other hand, when $\abs{\eta} < 4s$, we certainly have $\abs{p(\eta)} \ll s^2$, and $\abs{\xi-\eta} \gg s$ in the domain of integration, and so the integral is bounded above by $s^{-N} \int_{\abs{\xi} > 100s} \frac{s^2}{\abs{\xi}^2} \jap{\xi-\eta}^{-M} d\xi \ll 1$ (by taking $M$ and $N$ to be large). We have thus bounded $\int_{\abs{\xi} >100s}$. We now turn to the second integral, where we have
\[\int_{\abs{\xi} \leq 100s} \ll \int_{\abs{\xi} \leq 100s} \abs{\phi(\xi-\eta)}\frac{\abs{p(\eta)} + s}{sd(\xi,\Sigma_\zeta)} d\xi.\]
When $\abs{\eta} > 200s$, we have $\abs{\xi-\eta} \geq s$, so the integral is bounded by $s^{-N} \int \frac{\abs{\xi}^2 + \abs{\xi-\eta}^2 + s}{sd(\xi,\Sigma_\zeta)} \jap{\xi-\eta}^{-M} d\xi \ll 1$, by~\eqref{singbound}. On the other hand, when $\abs{\eta} \leq 200s$, we have $\abs{p(\eta)} \sim sd(\eta,\Sigma_\zeta)$, and by the triangle inequality
\[\frac{\abs{p(\eta,s)} + s}{\abs{p(\xi)}} \ll \frac{\abs{\xi-\eta} + d(\xi,\Sigma_\zeta) + 1}{d(\xi,\Sigma_\zeta)} \ll 1 + d(\xi,\Sigma_\zeta)^{-1} \jap{\xi-\eta},\]
and for large $M$ our integral is bounded by
\[\int \jap{\xi-\eta}^{-M} d\xi + \int d(\xi,\Sigma_\zeta)^{-1} \jap{\xi-\eta}^{-M} d\xi.\]
The first integral is obviously finite, and the second integral is finite by~\eqref{singbound}. We have thus proven~\eqref{singularity}. The estimate~\eqref{adjointsingularity} is essentially the adjoint of this estimate, and is proven in the same way. For~\eqref{lfestimate}, we note that $\norm{u_B}_{L^2}\ll \abs{\zeta}^{-1/2} \norm{u_B}_{X_\zeta^{1/2}}$ and apply~\eqref{adjointsingularity}. Similarly, for~\eqref{hfgradestimate} we use the fact that $\abs{\xi}^2 \sim p(\xi)$ for $\abs{\xi} \geq 8s$, which implies that $\norm{\nabla(Hu_B)}_{L^2} \ll \norm{u_B}_{X_\zeta^{1/2}} \ll \norm{u}_{\dot X_\zeta^{1/2}}$. The last estimate~\eqref{hfestimate} is proven in the same way using the observation that $\abs{p(\xi)} \gg s^2$ when $\abs{\xi} \geq 8s$.
 \end{proof}

\begin{corollary}
\label{linftybilinear}
If $f$ is a bounded function and $\zeta_i \in \C^d$ are such that $\zeta_i \cdot \zeta_i = 0$ and $\abs{\zeta_1}=\abs{\zeta_2}$, then
\[\abs{\int f u_B v_B dx} \ll s^{-1}\norm{f}_{L^\infty} \norm{u}_{\dot X^{1/2}_{\zeta_1}} \norm{v}_{\dot X^{1/2}_{\zeta_2}},\]
where $\sqrt{2}s = \abs{\zeta_1}=\abs{\zeta_2}$.
\end{corollary}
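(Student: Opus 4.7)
The proof of this corollary is in fact a direct application of H\"older's inequality combined with the low-frequency-type $L^2$ bound \eqref{lfestimate} from Lemma \ref{localizationlemma}. The plan is to estimate
\[\abs{\int f u_B v_B dx} \leq \norm{f}_{L^\infty} \norm{u_B}_{L^2} \norm{v_B}_{L^2},\]
and then apply \eqref{lfestimate} to each of the two $L^2$ factors. Since the hypothesis $\abs{\zeta_1}=\abs{\zeta_2}=\sqrt{2}s$ matches the parameter $s$ appearing in the lemma for both $\zeta_1$ and $\zeta_2$, we get $\norm{u_B}_{L^2}\ll s^{-1/2}\norm{u}_{\dot X^{1/2}_{\zeta_1}}$ and $\norm{v_B}_{L^2}\ll s^{-1/2}\norm{v}_{\dot X^{1/2}_{\zeta_2}}$, whose product yields precisely the claimed $s^{-1}$ decay.

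There is essentially no obstacle to overcome here: all the technical work (the Fourier-side analysis near the characteristic sets $\Sigma_{\zeta_i}$ and the proof that cutting off in physical space by $\phi_B$ effectively smears the singularity of $p_\zeta$ on the unit scale) was already carried out in establishing the lemma. The only thing to verify is the compatibility of the $s$ parameters for the two $\zeta_i$, which is exactly what the equal-length hypothesis $\abs{\zeta_1}=\abs{\zeta_2}$ provides; note also that $\zeta_1$ and $\zeta_2$ need not be related in any other way (they may point in completely different complex directions, and their characteristic sets may intersect transversally), because the bound \eqref{lfestimate} is insensitive to this.

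The reason this bilinear estimate is stated and named separately is that it is the form in which the localization estimates will be used downstream in the main argument: when one expands $\int q u_1 u_2$ or $\int q e^{ix\cdot k} \psi_1 \psi_2$, the integrand is of the form $f \cdot u_B \cdot v_B$ with $f$ a bounded multiplier, and the $s^{-1}$ decay is exactly what drives the convergence $\int qe^{ix\cdot k}\psi_1 \psi_2\,dx \to 0$ as $\abs{\zeta_i}\to\infty$.
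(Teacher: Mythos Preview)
Your proof is correct and essentially identical to the paper's: apply H\"older (the paper writes ``Cauchy--Schwarz'') to bound $\abs{\int f u_B v_B\,dx}\le \norm{f}_{L^\infty}\norm{u_B}_{L^2}\norm{v_B}_{L^2}$, then invoke \eqref{lfestimate} for each factor.
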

\begin{proof}
We apply Cauchy-Schwarz, followed by~\eqref{lfestimate}
\begin{align*}
\abs{\int f \cdot u_B v_B dx} & \ll \abs{\int f u_B v_B dx} \\
& \ll \norm{f}_{L^\infty} \norm{u_B}_{L^2} \norm{v_B}_{L^2} \\
& \ll \abs{\zeta}^{-1}\norm{f}_{L^\infty} \norm{u}_{\dot X^{1/2}_{\zeta_1}} \norm{v}_{\dot X^{1/2}_{\zeta_2}}.
\end{align*}
\end{proof}
We now turn to the map $m_q$. We want to show that if $\gamma \in W^{1+\theta,\infty}(\R^d)$ then 
\begin{equation}
\label{qestimate}
\norm{m_q}_{\dot X^{1/2}_\zeta \to \dot X_\zeta^{-1/2}} \ll_B \omega(\norm{\nabla \log \gamma}_{W^{\theta,\infty}}) s^{-\theta},
\end{equation}
where $\omega(\epsilon) \to 0$ as $\epsilon \to 0$. We will show a bit more
\begin{theorem}
\label{bilineartheorem}
Let $\zeta_i$ satisfy $\zeta_i \cdot \zeta_i = 0$ and $\abs{\zeta_1} = \abs{\zeta_2}$. Let $\theta \in [0,1]$ be arbitrary. If $\gamma \in W^{1+\theta,\infty}(\R^d)$, then
\begin{equation}
\label{bilinearqestimate}
\abs{\jap{m_q(u),v}} \ll_B \omega(\norm{\nabla \log \gamma}_{W^{\theta,\infty}}) s^{-\theta} \norm{u}_{\dot X^{1/2}_{\zeta_1}} \norm{v}_{\dot X^{1/2}_{\zeta_2}},
\end{equation}
where $\omega(\epsilon) \to 0$ as $\epsilon \to 0$. If $\gamma \in C^1$, then
\begin{equation}
\label{endpointqestimate}
\abs{\jap{m_q(u),v}} \ll_\gamma o_{s\to \infty}(1) \norm{u}_{\dot X^{1/2}_{\zeta_1}} \norm{v}_{\dot X^{1/2}_{\zeta_2}}.
\end{equation}
\end{theorem}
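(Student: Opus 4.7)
The plan is to unfold the duality definition: with $g = \gamma^{1/2}$ and $F := \nabla\log g = \tfrac{1}{2}\nabla\log\gamma$, and exploiting that $\nabla g$ is supported in the ball where $\phi_B = 1$, I would write
\[
\jap{m_q(u), v} = \int \abs{F}^2 u_B v_B\, dx - \int F \cdot \nabla(u_B v_B)\, dx.
\]
The first term is immediate via two applications of \eqref{lfestimate}, giving at most $\norm{F}_{L^\infty}^2 s^{-1}\norm{u}_{\dot X^{1/2}_{\zeta_1}}\norm{v}_{\dot X^{1/2}_{\zeta_2}}$, which is already much better than the target. All the work lies in the second term, which I plan to attack in two stages.

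First, as a building block, I would prove that for \emph{any} bounded $G$ (later chosen to be a piece of $F$),
\[
\abs{\int G \cdot \nabla(u_B v_B)\, dx} \ll \norm{G}_{L^\infty}\norm{u}_{\dot X^{1/2}_{\zeta_1}}\norm{v}_{\dot X^{1/2}_{\zeta_2}}.
\]
Decomposing $u_B = Lu_B + Hu_B$, $v_B = Lv_B + Hv_B$ and expanding the product rule, every term in which a gradient falls on a high-frequency piece, or in which a high-frequency factor appears, is handled directly by \eqref{hfgradestimate}, \eqref{hfestimate}, and \eqref{lfestimate}, with at least an $s^{-1/2}$ to spare. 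The remaining purely low-frequency contributions combine symmetrically into $\int G \cdot \nabla(Lu_B Lv_B)\,dx$. Here the key observation is that $\nabla(Lu_B Lv_B)$ has Fourier support in $\{\abs{\xi}\ll s\}$, so I may replace $G$ by its Fourier truncation $L'G$ at scale $\sim s$ at no cost; integration by parts then converts this into $-\int \nabla\cdot L'G\, \cdot Lu_B Lv_B\,dx$, Bernstein's inequality gives $\norm{\nabla L'G}_{L^\infty}\ll s\norm{G}_{L^\infty}$, and pairing with $\norm{Lu_B}_{L^2}\norm{Lv_B}_{L^2}\ll s^{-1}\norm{u}_{\dot X^{1/2}_{\zeta_1}}\norm{v}_{\dot X^{1/2}_{\zeta_2}}$ (from \eqref{lfestimate}) delivers the claimed bound. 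This step, in which the derivative of $F$ that we do not possess is manufactured out of the Fourier localization of $Lu_B Lv_B$, is the main obstacle of the argument.

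To upgrade from this $L^\infty$ building block to \eqref{bilinearqestimate} for $\theta \in (0,1]$, I would mollify: write $F = F_\epsilon + (F - F_\epsilon)$ with $F_\epsilon = \phi_\epsilon * F$ and use the standard estimates $\norm{F - F_\epsilon}_{L^\infty}\ll \epsilon^\theta\norm{F}_{W^{\theta,\infty}}$ and $\norm{\nabla F_\epsilon}_{L^\infty}\ll \epsilon^{\theta-1}\norm{F}_{W^{\theta,\infty}}$. For the smooth piece $F_\epsilon \in W^{1,\infty}$, a direct integration by parts followed by \eqref{lfestimate} yields $\epsilon^{\theta-1} s^{-1}\norm{F}_{W^{\theta,\infty}}\norm{u}_{\dot X^{1/2}_{\zeta_1}}\norm{v}_{\dot X^{1/2}_{\zeta_2}}$; for the rough piece the building block gives $\epsilon^\theta\norm{F}_{W^{\theta,\infty}}\norm{u}_{\dot X^{1/2}_{\zeta_1}}\norm{v}_{\dot X^{1/2}_{\zeta_2}}$. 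Setting $\epsilon = s^{-1}$ balances the two and produces the desired $s^{-\theta}$ decay, so that $\omega$ may be taken linear in its argument.

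The $C^1$ statement \eqref{endpointqestimate} follows from exactly the same split, but now $\norm{F - F_\epsilon}_{L^\infty}$ is controlled only by the modulus of continuity of $F$, which vanishes as $\epsilon \to 0$ at a $\gamma$-dependent rate. Given any $\delta > 0$, I would first choose $\epsilon$ small enough that the rough remainder contributes at most $\delta/2$, and then take $s$ large depending on $\epsilon$ so that the smooth contribution $\epsilon^{-1} s^{-1} \norm{F}_{L^\infty}$ is at most $\delta/2$; this yields the $o_{s\to\infty}(1)$ decay with $\gamma$-dependent implicit constants.
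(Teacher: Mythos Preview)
Your proof is correct and follows essentially the same route as the paper: the same Leibniz splitting of $\jap{m_q(u),v}$, the same high/low decomposition for the $L^\infty$ building block (with the low--low piece handled via the finite-band/Bernstein property of $Lu_B Lv_B$), and the same mollification $F = F_\epsilon + (F-F_\epsilon)$ for the $C^1$ endpoint. The only cosmetic differences are that the paper applies Bernstein directly to $\nabla(Lu_B Lv_B)$ in $L^1$ rather than moving the derivative onto a truncated $G$, obtains \eqref{bilinearqestimate} by citing interpolation rather than redoing the mollification, and makes the concrete choice $\epsilon = s^{-1/2}$ for \eqref{endpointqestimate} instead of your $\delta$-$\epsilon$ argument.
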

 By the Leibniz rule, we have $\nabla(g^{-1} u v) = \nabla(g^{-1}) u v + g^{-1} \nabla(uv)$, so 
\begin{equation}
\label{lowerorder}
\jap{m_q(u),v} = -\int (\nabla g\cdot \nabla g^{-1}) u v dx - \int \nabla(\log g) \cdot \nabla(u v)dx.
\end{equation}
If $g$ is merely Lipschitz, then we can estimate the first term by using Corollary~\ref{linftybilinear}. For the second term, we use Lemma~\ref{localizationlemma} to prove the following estimate:
\begin{lemma}
Let $\zeta_i \in \C^d$ be such that $\zeta_i\cdot \zeta_i = 0$ and $\abs{\zeta_1}=\abs{\zeta_2}$. Let $\theta \in [0,1]$, and let $f \in W^{\theta,\infty}(\R^d)$. Then for any Schwartz functions $u,v$, we have
\begin{equation}
\label{mainestimate}
\abs{\int f \cdot \nabla(u_B v_B) dx}  \ll \norm{f}_{L^{\infty}} \norm{u}_{\dot X^{1/2}_{\zeta_1}} \norm{v}_{\dot X^{1/2}_{\zeta_2}}.
\end{equation}
If $f$ is Lipschitz, then
\begin{equation}
\label{knownestimate}
\abs{\int f \cdot \nabla(u_B v_B) dx}  \ll s^{-1}\norm{\nabla f}_{L^{\infty}} \norm{u}_{\dot X^{1/2}_{\zeta_1}} \norm{v}_{\dot X^{1/2}_{\zeta_2}}, \\
\end{equation}
so that by interpolation we also have
\begin{equation}
\label{interpolatedestimate}
\abs{\int f \cdot \nabla(u_B v_B) dx} \ll_B s^{-\theta} \norm{f}_{W^{\theta,\infty}} \norm{u}_{\dot X^{1/2}_{\zeta_1}} \norm{v}_{\dot X^{1/2}_{\zeta_2}}.
\end{equation}
\end{lemma}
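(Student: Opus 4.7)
My plan is to prove the two endpoint estimates \eqref{mainestimate} and \eqref{knownestimate} directly and then deduce \eqref{interpolatedestimate} by interpolating in $f$. The dichotomy between the endpoints is in how we place the derivative: for $f\in L^\infty$ we keep $\nabla$ on the product $u_B v_B$ and absorb the $s^{1/2}$ cost of the derivative into the extra $s^{-1/2}$ supplied by \eqref{lfestimate} on the other factor; for $f\in W^{1,\infty}$ we integrate by parts and move $\nabla$ onto $f$, so that two applications of \eqref{lfestimate} produce the $s^{-1}$ factor.

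For \eqref{mainestimate}, use the product rule $\nabla(u_B v_B)=v_B\nabla u_B+u_B\nabla v_B$ and H\"older ($L^\infty\cdot L^2\cdot L^2\to L^1$) to obtain
\[\abs{\int f\cdot\nabla(u_B v_B)\,dx}\leq \norm{f}_{L^\infty}\sps{\norm{\nabla u_B}_{L^2}\norm{v_B}_{L^2}+\norm{u_B}_{L^2}\norm{\nabla v_B}_{L^2}}.\]
The factors $\norm{u_B}_{L^2}$ and $\norm{v_B}_{L^2}$ each give $s^{-1/2}$ times the respective $\dot X^{1/2}_\zeta$ norms via \eqref{lfestimate}, so it suffices to prove $\norm{\nabla u_B}_{L^2}\ll s^{1/2}\norm{u}_{\dot X^{1/2}_{\zeta_1}}$ (and the analogue for $v$). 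This follows from \eqref{adjointsingularity}, which gives $\norm{u_B}_{X^{1/2}_{\zeta_1}}\ll\norm{u}_{\dot X^{1/2}_{\zeta_1}}$, combined with the pointwise inequality $\abs{\xi}^2\ll s(\abs{\zeta}+\abs{p_\zeta(\xi)})$; the latter holds in the region $\abs{\xi}\leq 16s$ because $\abs{\zeta}+\abs{p_\zeta}\geq\abs{\zeta}\sim s$ while $\abs{\xi}^2\ll s^2$, and holds in $\abs{\xi}\geq 16s$ because $\abs{p_\zeta(\xi)}\sim\abs{\xi}^2$, as shown in the proof of Lemma \ref{localizationlemma}. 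The $s^{\pm 1/2}$ factors then cancel.

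For \eqref{knownestimate}, since $u_B v_B$ is Schwartz and $f$ is Lipschitz, integration by parts yields
\[\int f\cdot\nabla(u_B v_B)\,dx=-\int(\div f)\,u_B v_B\,dx,\]
and H\"older combined with two applications of \eqref{lfestimate} bounds this by $s^{-1}\norm{\nabla f}_{L^\infty}\norm{u}_{\dot X^{1/2}_{\zeta_1}}\norm{v}_{\dot X^{1/2}_{\zeta_2}}$. For \eqref{interpolatedestimate}, fix $u,v$ and regard $T(f):=\int f\cdot\nabla(u_B v_B)\,dx$ as a linear functional; the endpoint bounds read $\abs{T(f)}\ll\norm{f}_{L^\infty}C$ and $\abs{T(f)}\ll s^{-1}\norm{f}_{W^{1,\infty}}C$, where $C=\norm{u}_{\dot X^{1/2}_{\zeta_1}}\norm{v}_{\dot X^{1/2}_{\zeta_2}}$, and real (or complex) interpolation between $L^\infty$ and $W^{1,\infty}$ yields the stated $s^{-\theta}\norm{f}_{W^{\theta,\infty}}$ bound.

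I expect no serious obstacle in this argument: the bulk of the work is already contained in Lemma \ref{localizationlemma}. The one step that requires genuine attention is the symbol comparison $\abs{\xi}^2\ll s(\abs{\zeta}+\abs{p_\zeta(\xi)})$ used to convert the inhomogeneous $X^{1/2}_\zeta$ control from \eqref{adjointsingularity} into the $H^1$-type bound on $u_B$ needed in the proof of \eqref{mainestimate}.
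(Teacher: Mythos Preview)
Your argument is correct. For \eqref{knownestimate} and \eqref{interpolatedestimate} you do exactly what the paper does (integrate by parts and invoke Corollary~\ref{linftybilinear}, then interpolate in $f$). For \eqref{mainestimate} you take a shorter route than the paper: the paper splits $u_B=Lu_B+Hu_B$ and $v_B=Lv_B+Hv_B$ and handles the four interactions separately using \eqref{lfestimate}, \eqref{hfgradestimate}, \eqref{hfestimate} and the finite band property, whereas you bypass the decomposition entirely by proving the single bound $\norm{\nabla u_B}_{L^2}\ll s^{1/2}\norm{u}_{\dot X^{1/2}_{\zeta_1}}$ via the pointwise symbol inequality $\abs{\xi}^2\ll s(\abs{\zeta}+\abs{p_\zeta(\xi)})$ together with \eqref{adjointsingularity}, and then pair it with \eqref{lfestimate} on the other factor. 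Your symbol inequality is easily checked (for $\abs{\xi}\ll s$ it follows from $\abs{\zeta}\sim s$, and for $\abs{\xi}\gg s$ from $\abs{p_\zeta(\xi)}\sim\abs{\xi}^2$; note it uses $s\gtrsim 1$, which is harmless here). Your approach is more economical; the paper's decomposition yields sharper bounds on the individual high-high and high-low pieces (of order $s^{-1}$ and $s^{-1/2}$ respectively), but that extra precision is not needed for this lemma since the low-low term already saturates the $O(1)$ bound.
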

\begin{proof}
To prove~\eqref{knownestimate}, we integrate by parts and apply Corollary~\ref{linftybilinear}.

If $f$ is merely bounded, then all we can say is that
\[\int \abs{f \cdot \nabla (u_B v_B)} dx \leq \norm{f}_{L^\infty} \int \abs{\nabla(u_Bv_B)} dx.\]
We can decompose the integrand using our projections onto high and low frequencies as 
\[\nabla(u_Bv_B) = \nabla(Hu_B Hv_B) + \nabla(Hu_B Lv_B) + \nabla (Lu_B Hv_B) + \nabla(Lu_B Lv_B).\]
Since the Fourier support of $Lu_B Lv_B$ is contained in a ball of radius $\ll s$, we have
\begin{align*}
\norm{\nabla(Lu_B Lv_B)}_{L^{1}} &\ll s \norm{Lu_B Lv_B}_{L^{1}} \\
&\ll s\norm{Lu_B}_{L^2} \norm{Lv_B}_{L^2}\\
& \ll s \norm{u_B}_{L^2} \norm{v_B}_{L^2},
\end{align*}
which is bounded by $\norm{u}_{\dot X^{1/2}_{\zeta_1}} \norm{v}_{\dot X^{1/2}_{\zeta_2}}$ by~\eqref{lfestimate}.

We now turn to the terms $\nabla(Hu_B Hv_B) + \nabla(Hu_B Lv_B) + \nabla(Lu_B H v_B)$. For the high-high term, we use the product rule,
\[\norm{\nabla(Hu_B Hv_B)}_{L^{1}} \ll \norm{\nabla(Hu_B)}_{L^2} \norm{Hv_B}_{L^2} + \norm{Hu_B}_{L^2} \norm{\nabla(Hv_B)}_{L^2}.\]
We combine~\eqref{hfestimate} with~\eqref{hfgradestimate} to conclude that this is also bounded by $\norm{u}_{\dot X^{1/2}_{\zeta_1}} \norm{v}_{\dot X^{1/2}_{\zeta_2}}$.

For the high-low term, we use the product rule and the finite band property
\[\norm{\nabla(Hu_B Lv_B)}_{L^{1}} \ll \norm{\nabla(Hu_B)}_{L^2} \norm{v_B}_{L^2} + s \norm{Hu_B}_{L^2} \norm{v_B}_{L^2},\]
which is bounded by $\norm{u}_{\dot X^{1/2}_{\zeta_1}} \norm{v}_{\dot X^{1/2}_{\zeta_2}}$ by~\eqref{hfgradestimate},~\eqref{lfestimate} and~\eqref{hfestimate}.

This concludes the proof of~\eqref{mainestimate}.
\end{proof}
To prove the theorem, recall that it remains to estimate the $\nabla(\log g)$ term in~\eqref{lowerorder}. The estimate~\eqref{bilinearqestimate} is straightforward, so we will prove~\eqref{endpointqestimate}. Let $\phi_\epsilon = \epsilon^{-d} \phi(x/\epsilon)$, where $\phi$ is a $C_0^\infty$ function supported on the unit ball and $\int \phi = 1$. With $f = \log g$, write $f_\epsilon = f * \phi_\epsilon$. By~\eqref{mainestimate} and~\eqref{knownestimate},
\begin{align*}
\abs{\int \nabla f \cdot \nabla(uv) dx} & \leq  \abs{\int \nabla f_\epsilon \cdot \nabla(uv) dx} + \abs{\int \nabla (f - f_\epsilon)\cdot \nabla(uv) dx} \\
& \ll (s^{-1} \norm{\nabla^2 f_\epsilon}_{L^\infty} + \norm{\nabla f - \nabla f_\epsilon}_{L^\infty}) \norm{u}_{\dot X^{1/2}_{\zeta_1}} \norm{v}_{\dot X^{1/2}_{\zeta_2}} \\
& \ll (s^{-1} \epsilon^{-1} \norm{\nabla f}_{L^\infty} + \norm{\nabla f - \nabla f_\epsilon}_{L^\infty}) \norm{u}_{\dot X^{1/2}_{\zeta_1}} \norm{v}_{\dot X^{1/2}_{\zeta_2}}.
\end{align*}
Take $\epsilon = s^{-1/2}$. Then $\epsilon \to 0$ as $s \to \infty$, so $\norm{\nabla f - \nabla f_\epsilon}_{L^{\infty}} \to 0$ by the continuity of $\nabla f$. On the other hand, $s^{-1} \epsilon^{-1} \to 0$ as well, so we have~\eqref{endpointqestimate}.
\section{An averaged estimate}
To obtain control of our solutions to the equation $(\Delta_\zeta - m_q)\phi = q$ in $\dot X^{1/2}_\zeta$, it remains to estimate $\norm{q}_{\dot X^{-1/2}_\zeta}$. The worst part of $q$ looks like $\Delta \log g = \nabla \cdot (\nabla \log g)$, and so we are led to bound expressions of the form
\[\norm{\nabla f}_{\dot X^{-1/2}}^2 := \sum_i \norm{\pd_i f}_{\dot X^{-1/2}}^2,\]
where $f$ is some continuous function with compact support. Since $f$ is compactly supported, the $\dot X^{-1/2}_\zeta$ norm is controlled by the $X^{-1/2}_\zeta$ norm. At high frequencies, $p(\xi)^{-1/2} \sim \abs{\xi}^{-1}$, so $\norm{H\nabla f}_{X^{-1/2}} \leq \norm{f}_2$. At low frequencies, however, $p(\xi)$ could be small. From the defintion of $X^{-1/2}_\zeta$ and the finite band property, we have the straightforward estimate
\[\norm{L\nabla f}_{X^{-1/2}_\zeta} \ll s^{-1/2} \norm{L\nabla f}_{L^2} \ll s^{1/2} \norm{f}_{L^2}.\]
Unfortunately, the factor $s^{1/2}$ will overpower the estimate $\norm{m_q}_{\dot X^{1/2}_\zeta \to \dot X^{-1/2}_\zeta} \ll s^{-\theta}$ that we obtained earlier unless $\theta \geq 1/2$. To overcome this problem, we will use that fact that we have at least two degrees of freedom in choosing $\zeta$. If we average over these parameters, we can obtain a better estimate that does not involve a factor of $s^{1/2}$.

Given $k \in \R^d$, we set
\begin{align*}
\zeta_1 & = s \eta_1 + i\sps{\frac k 2 + r\eta_2} \\
\zeta_2 & = -s \eta_1 + i\sps{\frac k 2 - r\eta_2}, 
\end{align*}
where $\eta_1, \eta_2 \in S^{d-1}$ satisfy $(k,\eta_1) = (k,\eta_2) = (\eta_1,\eta_2) = 0$ and $\abs{k}^2/4 + r^2 = s^2$. The vectors $\zeta_{i}$ are chosen so that $\zeta_{i} \cdot \zeta_{i} = 0$ and $\zeta_1 + \zeta_2 = ik$. Our goal is to find a sequence $s^{(n)}$, $\eta_i^{(n)}$ such that $s^{(n)} \to \infty$ and $\norm{q}_{\dot X^{-1/2}_{\zeta}} \to 0$ for $\zeta = \zeta_i^{(n)}$, $i=1,2$. Let $P$ be some fixed two-plane through the origin normal to $k$. Then $\eta_1$ can be taken to be any vector in $P \cap S^{d-1}$, which we identify with $S^1$. We choose $\eta_2 \in P \cap S^{d-1}$ orthogonal to $\eta_1$. For definiteness, we can require $\{\eta_1,\eta_2\}$ to be positively oriented, so that $\eta_2$ is determined by the choice of $\eta_1$, but this will not be relevant to our calculations. The idea now is that by averaging over all possible choices of $\eta_1\in S^1$ and $s$ in a dyadic region $[\lambda,2\lambda]$, we can get an improved estimate for the size of the Sch\"odinger potential in $\dot X_{\zeta_i}^{-1/2}$. 
\begin{lemma}
\label{averageestimate}
Let $\theta \in [0,1]$. Let $\phi_B$ be a smooth function with compact support. Fix $k \in \R^d$ and a two-plane $P \perp k$, and let $\zeta_{i} = \zeta_{i}(s,\eta_1)$ be as above. Then for sufficiently large $\lambda$,
\begin{equation}
\label{avgestimateinterpolated}
\int_{S^1}\int_{\lambda}^{2\lambda} \norm{\phi_B \nabla f}_{\dot X^{-1/2}_{\zeta_i}}^2 dsd\eta_1 \ll_{\phi_B,k} \lambda^{1-\theta} \norm{f}_{H^\theta}^2.
\end{equation}
If $f \in L^2$, then
\begin{equation}
\label{averageendpointestimate}
\int_{S^1}\int_{\lambda}^{2\lambda} \norm{\phi_B \nabla f}_{\dot X^{-1/2}_{\zeta_i}}^2 dsd\eta_1 \ll_{\phi_B, k,f} o_{\lambda \to \infty}(\lambda).
\end{equation}
\end{lemma}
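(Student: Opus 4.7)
The plan is to reduce to a pointwise multiplier estimate and then interpolate between the endpoints $\theta=0$ and $\theta=1$. First I would apply the localization estimate~\eqref{singularity} with $\nabla f$ in place of $u$, which gives $\|\phi_B\nabla f\|_{\dot X^{-1/2}_\zeta}\ll\|\nabla f\|_{X^{-1/2}_\zeta}$ uniformly in $\zeta$. By Plancherel the squared norm on the right equals $\int |\xi|^2|\hat f(\xi)|^2(|\zeta|+|p_\zeta(\xi)|)^{-1}d\xi$, so Fubini reduces~\eqref{avgestimateinterpolated} to the pointwise kernel bound
\[
\Phi(\xi):=|\xi|^2\int_{S^1}\int_\lambda^{2\lambda}\frac{ds\,d\eta_1}{|\zeta_i|+|p_{\zeta_i}(\xi)|}\ll \lambda^{1-\theta}\jap{\xi}^{2\theta}.
\]

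The endpoint $\theta=1$ is immediate from $|\zeta|+|p_\zeta|\gg s\gg\lambda$, which yields $\Phi(\xi)\ll|\xi|^2\ll\jap{\xi}^2$. The endpoint $\theta=0$, requiring $\Phi\ll\lambda$, is the heart of the proof. For $|\xi|\gg\lambda$ we have $|p_\zeta(\xi)|\sim|\xi|^2$ (established inside the proof of Lemma~\ref{localizationlemma}), so $\Phi\ll\lambda$ trivially. The remaining case is the low-frequency regime $|\xi|\ll\lambda$, where the averaging must be used essentially.

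For $|\xi|\ll\lambda$ I would parametrize $\eta_1=(\cos\alpha,\sin\alpha)$, $\eta_2=(-\sin\alpha,\cos\alpha)$ in a fixed orthonormal basis of $P$, and set $B=|\xi_P|$, $A=|\xi|^2+k\cdot\xi$. Using $r=\sqrt{s^2-|k|^2/4}$ a direct calculation yields
\[
|p_{\zeta_i}(\xi)|^2 = 4s^2B^2 + A^2 - 4ArBu + O(|k|^2 B^2), \qquad u:=\sin(\alpha-\alpha_0),
\]
where $\alpha_0$ is the argument of $\xi_P$ in $P$. Substituting $u$ collapses the $\alpha$-integral to $2\int_{-1}^1(\sqrt{1-u^2}(s+|p|))^{-1}du$, and the substitution $t=|p_{\zeta_i}(\xi)|$ turns this into a standard elliptic-type one-dimensional integral which one can evaluate to obtain the uniform bound
\[
\int_{S^1}\frac{d\alpha}{s+|p_{\zeta_i}(\xi)|}\ll \min\Bigl(\frac{1}{sB},\ \frac{1}{s+|A|}\Bigr).
\]
Integrating over $s\in[\lambda,2\lambda]$ and multiplying by $|\xi|^2$ gives $\Phi(\xi)\ll\lambda$ in the main regime $B\gg 1$ and $sB\gg A$. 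The remaining degenerate regime $B\ll 1$ (i.e.\ $\xi$ nearly perpendicular to $P$) is handled by the second factor in the minimum: either $|A|$ is not too small and the $1/(s+|A|)$ bound controls things, or $\xi$ is forced to lie near the sphere $|\xi+k/2|=|k|/2$, which bounds $|\xi|$ in terms of $|k|$ and makes the trivial bound $\Phi\ll|\xi|^2\ll|k|^2\ll\lambda$ sufficient.

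Finally,~\eqref{averageendpointestimate} follows from~\eqref{avgestimateinterpolated} by a density argument: given $f\in L^2$ and $\epsilon>0$, decompose $f=f_1+f_2$ with $f_1\in H^1$ and $\|f_2\|_{L^2}<\epsilon$, apply~\eqref{avgestimateinterpolated} with $\theta=1$ to $f_1$ and $\theta=0$ to $f_2$, divide by $\lambda$, and send $\lambda\to\infty$ then $\epsilon\to 0$. The main obstacle is uniform control of the averaged integral $\int_{S^1}\int_\lambda^{2\lambda}(s+|p_\zeta|)^{-1}\,ds\,d\alpha$ without any logarithmic loss, and in particular handling the degenerate directions where $\xi$ is nearly perpendicular to the averaging plane $P$.
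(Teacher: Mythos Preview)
Your overall strategy matches the paper's: reduce via the localization estimate~\eqref{singularity} to the $X^{-1/2}_\zeta$ norm, apply Fubini to reduce to the pointwise kernel bound $\Phi(\xi)\ll\lambda$, establish the endpoints $\theta=0,1$ and interpolate, and derive~\eqref{averageendpointestimate} by a density/mollification argument. The $\theta=1$ endpoint and the density step are correct and essentially identical to the paper.

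The gap is in your angular bound. The claim
\[
\int_{S^1}\frac{d\alpha}{s+|p_{\zeta_i}(\xi)|}\ll \min\Bigl(\frac{1}{sB},\ \frac{1}{s+|A|}\Bigr)
\]
fails in the transition regime $|A|\sim sB$. Take $k=0$ (so $r=s$) and $A=2sB$; then your own formula gives $|p|^2=4s^2B^2(1-u^2)+(A-2sBu)^2=8s^2B^2(1-u)$, and the integral
\[
2\int_{-1}^1\frac{du}{\sqrt{1-u^2}\,(s+2\sqrt{2}\,sB\sqrt{1-u})}
\]
is of order $(\log B)/(sB)$ for $B\gg 1$, not $1/(sB)$. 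Your case split also misses this regime: ``$B\gg 1$ and $sB\gg A$'' together with ``$B\ll 1$'' does not cover $B\gg 1$, $sB\sim A$; for $|\xi|\gg|k|$ this is $B\sim|\xi|^2/\lambda$, and since $|\xi|$ may be as large as $\lambda$ the lost factor is $\log\lambda$.

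The approach is salvageable because this logarithm is integrable in $s$: writing $s_0=A/(2B)$, one has $\int_{S^1}(s+|p|)^{-1}d\alpha\ll\frac{1}{sB}\bigl(1+\log^+(s/|s-s_0|)\bigr)$, and $\int_\lambda^{2\lambda}\log^+(\lambda/|s-s_0|)\,ds\ll\lambda$, so the double integral is $\ll 1/B$ and $\Phi\ll|\xi|^2/B\ll\lambda$ in this regime after all. The paper takes the opposite order: it isolates the bad set $E^{(s)}=\{|\xi^\perp|\gtrsim|\xi|^2/s\}$, integrates in $s$ first (via the change of variables $s\mapsto r$) to obtain a logarithm $\log(|\xi_2|/\mu+1)$, and then absorbs it using the angular measure $|\{\eta_1:|\eta_1\cdot\xi|\sim\mu\}|\ll\mu/|\xi^\perp|$ together with a dyadic sum in $\mu$. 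Either order works, but in neither order is a single integration log-free; the point is precisely the interplay between the two averages, which your $\min$-bound shortcuts.
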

\begin{proof}
We will prove the estimate~\eqref{avgestimateinterpolated} by interpolating the estimates
\begin{align}
\label{avgestimatederivative}
\int_{S^1}\int_{\lambda}^{2\lambda} \norm{\phi_B \nabla f}_{\dot X^{-1/2}_{\zeta_i}}^2 dsd\eta_1  & \ll \norm{\nabla f}_{L^2}^2\\
\label{avgestimate}
\int_{S^1}\int_{\lambda}^{2\lambda} \norm{\phi_B \nabla f}_{\dot X^{-1/2}_{\zeta_i}}^2 dsd\eta_1 & \ll \lambda \norm{f}_{L^2}^2.
\end{align}
For both of these estimates, we use the fact that 
\[\norm{\phi_B \nabla f}_{\dot X_{{\zeta_i}_i}^{-1/2}} \ll \norm{\nabla f}_{X_{\zeta_i}^{-1/2}}.\]
The first estimate~\eqref{avgestimatederivative} is then immediate since 
\[\norm{\nabla f}_{X_{\zeta_i}^{-1/2}}^2 \ll s^{-1} \norm{\nabla f}_{L^2}^2 \ll \lambda^{-1} \norm{\nabla f}_{L^2}^2\]
for $s \in [\lambda,2\lambda]$.

We now proceed to the second estimate~\eqref{avgestimate}. By definition,
\[\norm{\nabla f}_{X_{\zeta_i}^{-1/2}}^2 = \int \frac{\abs{\xi}^2}{\abs{p_{\zeta_i}(\xi)} + s} \abs{\hat f(\xi)}^2 d\xi.\]
We want to show that
\[\int_{S^1} \frac{1}{\lambda} \int_{\lambda}^{2\lambda} \int_{\R^d} \frac{\abs{\xi}^2}{\abs{p_{\zeta_i}(\xi)} + s} \abs{\hat f(\xi)}^2 d\xi \ll \int_{\R^d} \abs{\hat f(\xi)}^2 d\xi.\]
When $\abs{\xi} \ggg s$, we have $\abs{p(\xi)} \sim \abs{\xi}^2$, so this part of the integral gives us no trouble. The problem is that $p(\xi)$ does not have to be comparable to $\abs{\xi}^2$ when $\abs{\xi}$ is small compared with $s$. In fact, when $\xi \in \Sigma_{\zeta_i}$ (where $\Sigma_{\zeta_i} = \{\xi: p_{{\zeta_i}}(\xi)=0\}$), the best bound for the integrand is $s \abs{\hat f(\xi)}^2$. 

To get around this, we use the fact that any given $\xi$ cannot be contained in all of the codimension-2 spheres $\Sigma_{\zeta_i}$ at once. That is, if we fix a point $\xi$, then by varying the size of the sphere (which acts roughly like $s$) and the hyperplane in which it lies (which depends on $\eta_1$), we will mostly miss $\xi$. In fact, on average the denominator $\abs{p(\xi)} + s$ will be comparable to $\abs{\xi}^2$. 

More precisely, write 
\[p(\xi) \sim \abs{-\abs{\xi}^2 \pm 2r\xi\cdot \eta_2 - k\cdot \xi} + 2s\abs{\xi\cdot \eta_1}.\]
Assume $\lambda \geq 100 \jap{k}$. When $\abs{\xi} \ggg \abs{k}$, the $k \cdot \xi$ term is much smaller than the other terms, which are potentially of size $\abs{\xi}^2$. On the other hand, there is no problem in the region $\abs{\xi} \ll \abs{k}$, since the estimates are allowed to depend on $k$. We can thus ignore the $k \cdot \xi$ term by focusing our attention on the complement of the set $F : = \{\xi: \abs{\xi} \leq 100\jap{k}\}$.

Let $\xi_i = \eta_i \cdot \xi$. There are two ways we will get $p(\xi)$ to be comparable to $\abs{\xi}^2$. One way is to exploit the term $-\abs{\xi}^2$, but this only works if $2s\abs{\xi_2} \lll \abs{\xi}^2$. The other way is to use the terms $2r\xi_2$ and $2s\xi_1$, but these can only be comparable to $\abs{\xi}^2$ if the component of $\xi$ lying in $P$ is $\gg \abs{\xi}^2/s$. Thus we set $E^{(s)} = \{\xi: \abs{\xi^\perp} \geq \abs{\xi}^2/100s\}$, where $\xi^\perp$ is the orthogonal projection of $\xi$ onto $P$. 

Write $f = b^{(s)} + g^{(s)} + h$, where 
\begin{align*}
\hat h & = \chi_F \hat f\\
\hat b^{(s)} & = (1-\chi_F) \chi_{E^{(s)}} \hat f \\
\hat g^{(s)} & = (1-\chi_F)(1-\chi_{E^{(s)}}) \hat f.
\end{align*}
We estimate each term separately. First, 
\[\norm{\nabla h}_{X^{-1/2}_{\zeta_i}}^2 \ll s^{-1}\norm{f}_{L^2}^2 \ll \lambda^{-1}\norm{f}_{L^2}^2,\]
so~\eqref{avgestimate} holds for this part. 

For $\xi \notin E^{(s)} \cup F$, we have 
\[\abs{\xi_2} \leq \abs{\xi^\perp} \leq \abs{\xi}^2/100s, \quad\quad \abs{k} \leq \abs{\xi}/100.\]
 We then have 
\begin{align*}
\abs{\pm 2r\xi_2 - k\cdot \xi} &\leq \abs{\xi}^2/2\\
\abs{-\abs{\xi}^2 \pm 2r\xi_2  - k \cdot \xi} &\geq \abs{\xi}^2/2,
\end{align*}
so 
\[\norm{\nabla g^{(s)}}_{X^{-1/2}_{\zeta_{i}}} \ll \norm{f}_{L^2}.\]

We now turn to $b^{(s)}$. Note that if $\xi \in E^{(s)}$, we have $\abs{\xi}^2 \ll s \abs{\xi^\perp} \leq s \abs{\xi}$, so that $\abs{\xi} \ll s$ in this region.

Let $w(s,\eta_1,\xi) := \abs{\xi}^2/(\abs{p_{\zeta_i}(\xi)} + s)$. We wish to show that 
\begin{equation}
\label{integralestimate}
\int_{\eta_1 \in S^1} \int_\lambda^{2\lambda} w(s,\eta_1,\xi) ds d\eta_1 \ll \lambda.
\end{equation}
Let $\mu$ be some positive parameter. We first establish
\begin{align}
\notag
\int_\lambda^{2\lambda} \frac{\abs{\xi}^2}{\abs{-\abs{\xi}^2 \pm 2r\xi_2  - k \cdot \xi} + \mu \lambda}ds& \ll \int_{\lambda/2}^{2\lambda} \frac{\abs{\xi}^2}{\abs{-\abs{\xi}^2 \pm 2r\xi_2 - k\cdot \xi}+\mu \lambda} dr\\
& \ll \frac{\abs{\xi}^2}{\abs{\xi_2}} \log \sps{\frac{\abs{\xi_2}}{\mu} + 1}.
\label{logintegral}
\end{align}
Here we have used the fact that $s^2 = r^2 + \abs{k}^2/4$, so that $2s ds = 2r dr$ and $ds \ll dr$. We certainly have $r \leq 2\lambda$ when $s \leq 2\lambda$, and $s \geq \lambda$ implies $r \geq \lambda/2$, since $k \lll \lambda$. The last line follows from the following calculation: If $c,\lambda > 0$, then
\begin{align*}
\int_I \frac{1}{\abs{ax+b} + c} dx & = \frac{1}{\abs{a}}\int_{I}  \frac{1}{\abs{x + (b/a)} + (c/\abs{a})} dx \\
& \ll \frac{1}{\abs{a}} \int_0^{\abs{I}} \frac{1}{x + (c/\abs{a})} dx \\
& \ll \frac{1}{\abs{a}} \log\sps{\frac{\abs{I}+(c/\abs{a})}{c/\abs{a}}} \\
& \ll \frac{1}{\abs{a}} \log\sps{\frac{\abs{a}\abs{I}}{c} + 1}.
\end{align*}
Applying this with $a = \pm 2\xi_2$, $b = -\abs{\xi}^2-k\cdot \xi$, $c = \mu \lambda$, we have~\eqref{logintegral}. 

Now, for any $\mu > 0$, the portion of $S^1$ on which $\abs{\eta_1 \cdot \xi} \leq \mu$ (recall that $\eta_1 \in S^1 \subset P$) has length $\ll \mu/\abs{\xi^\perp}$. If $\abs{\eta_1\cdot \xi} \in [\mu/2,\mu]$, we have
\[w(s,\eta_1,\xi) \ll \frac{\abs{\xi}^2}{\abs{-\abs{\xi}^2 \pm 2r\xi_2  - k \cdot \xi} + \mu \lambda},\]
so by~\eqref{logintegral} it follows that
\begin{align*}
\int_{\abs{\eta_1\cdot \xi} \in [\mu/2,\mu]} \int_{\lambda}^{2\lambda} w(s,\eta_1,\xi) ds d\eta_1 &\ll \frac{\mu}{\abs{\xi^\perp}} \frac{\abs{\xi}^2}{\abs{\xi_2}} \log\sps{\frac{\abs{\xi_2}}{\mu} + 1} \\
& \ll \sps{\frac{\mu}{\abs{\xi_2}}}^{1-\epsilon} \frac{\abs{\xi}^2}{\abs{\xi^{\perp}}}\\
& \ll \sps{\frac{\mu}{\abs{\xi_2}}}^{1-\epsilon} \lambda
\end{align*}
for any $\epsilon \in (0,1)$. If $\abs{\eta_1\cdot \xi} \leq \abs{\xi^{\perp}}/2$, then $\abs{\eta_2 \cdot \xi} \geq \abs{\xi^{\perp}}/2$, so when $\mu \leq \abs{\xi^{\perp}}/4$, we have can replace $\abs{\xi_2}$ in the above estimate by $\abs{\xi^{\perp}}$. Setting $\epsilon = 1/2$, $\mu = 2^{-1-j} \abs{\xi^{\perp}}$ and summing over $j$,
\begin{equation}
\label{smallintervals}
\sum_{j \geq 0} \int_{\abs{\eta_1\cdot \xi} \in [2^{-2-j}, 2^{-1-j}]\abs{\xi^{\perp}}} \int_{\lambda}^{2\lambda} w(s,\eta_1,\xi)ds d\eta_1 \ll \sum_{j \geq 0} 2^{-j/2} \lambda \ll \lambda.
\end{equation}
On the other hand, when $\abs{\eta_1\cdot \xi} \geq \abs{\xi^{\perp}}/2$, we have $w(s,\eta_1,\xi) \ll \abs{\xi}^2/s\abs{\xi^{\perp}} \ll 1$, so
\begin{equation}
\label{largeintervals}
\int_{\abs{\eta_1\cdot \xi} \geq \abs{\xi^{\perp}}/2} \int_{\lambda}^{2\lambda} w(s,\eta_1,\xi) \ll \lambda.
\end{equation}
Combining~\eqref{smallintervals} and~\eqref{largeintervals}, we get~\eqref{integralestimate}. By Fubini's theorem, this implies that
\[\int_{\eta\in S^1} \int_{\lambda}^{2\lambda} \norm{\nabla b^{(s)}}_{X^{-1/2}_{\zeta_{i}}}^2 ds \ll \lambda \norm{f}_{L^2}^2.\]
Combining this with the estimates for $h$ and $g^{(s)}$, we have
\[\int_{\eta \in S^1} \int_{\lambda}^{2\lambda} \norm{\nabla f}_{X^{-1/2}_{\zeta_{i}}}^2ds \ll \lambda \norm{f}_{L^2}^2,\]
which concludes the proof of~\eqref{avgestimate}.

For~\eqref{averageendpointestimate}, we argue as in Theorem~\ref{bilineartheorem}. That is, we write $f = f_\epsilon + (f-f_{\epsilon})$, and obtain
\[\int_{\eta \in S^1} \int_{\lambda}^{2\lambda} \norm{\nabla f}_{X^{-1/2}_{\zeta_{i}}}^2ds \ll (\lambda^{-1} \norm{\nabla f_\epsilon}_{L^2}^2 + \norm{f-f_\epsilon}_{L^2}^2)\lambda = o_{\lambda\to \infty}(\lambda)\]
by taking $\epsilon = \lambda^{-1/4}$.
\end{proof}
\section{CGO solutions}
We now use these estimates to solve the equation $\Delta_\zeta \psi =q(1+\psi) $, which we can write as
\begin{equation}
\label{fixedpoint}
\psi = \Delta_\zeta^{-1} q \psi + \Delta_\zeta^{-1} q.
\end{equation}
\begin{theorem}
Let $\gamma_i$ ($i = 1,2$) be $W^{1,\infty}(\R^d)$ functions with $\gamma_i > c > 0$ and $\gamma_i = 1$ outside some ball $B$. Let $q_i = \Delta \gamma_i^{1/2}/\gamma_i^{1/2}$ be the corresponding potentials. Then for fixed $k$ there exists a sequence of pairs $\zeta_1^{(n)}, \zeta_2^{(n)}$ with $s^{(n)} \to \infty$ such that for each $\zeta = \zeta^{(n)}_{j}$ the potentials $q_i$ satisfy
\begin{equation}
\label{qdecay}
\norm{q_i}_{\dot X^{-1/2}_{\zeta_j}} \to 0
\end{equation}
as $s^{(n)} \to \infty$. If $\norm{\nabla \log \gamma}_{L^\infty}$ is sufficiently small, or if $\gamma \in C^1$, then there are solutions $u_i = e^{x \cdot \zeta_i} (1 + \psi_i)$ to the equation $(-\Delta + q_i) u_i = 0$, and
\begin{equation}
\label{solutiondecayendpoint}
\norm{\psi_i}_{\dot X^{1/2}_{\zeta_i}} \to 0
\end{equation}
as $s^{(n)} \to \infty$.
\end{theorem}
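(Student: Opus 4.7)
The plan has three ingredients: apply Lemma~\ref{averageestimate} to construct a good sequence $(s^{(n)},\eta_1^{(n)})$, convert the averaged control into the decay~\eqref{qdecay} of $\norm{q_i}_{\dot X^{-1/2}_{\zeta_j^{(n)}}}$, and then solve~\eqref{fixedpoint} by contraction mapping in $\dot X^{1/2}_{\zeta_i^{(n)}}$ using Theorem~\ref{bilineartheorem}.

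For the sequence, set $f_i = \log g_i = \tfrac12\log\gamma_i$. Since $\gamma_i\equiv 1$ outside $B$ and $\gamma_i$ is bounded above and below in $W^{1,\infty}$, each $f_i$ is compactly supported and Lipschitz, hence in $L^2$. I apply the endpoint bound~\eqref{averageendpointestimate} with $f = f_i$ and both $\zeta_j = \zeta_j(s,\eta_1)$, sum the four resulting inequalities, and divide by $\lambda$:
\[
\frac{1}{\lambda}\int_{S^1}\int_{\lambda}^{2\lambda}\sum_{i,j=1}^{2}\norm{\phi_B\nabla f_i}^2_{\dot X^{-1/2}_{\zeta_j(s,\eta_1)}}\,ds\,d\eta_1 \longrightarrow 0
\]
as $\lambda\to\infty$. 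Taking $\lambda_n = 2^n$ and applying Chebyshev on $[\lambda_n,2\lambda_n]\times S^1$ extracts $(s^{(n)},\eta_1^{(n)})$ with $s^{(n)}\to\infty$ on which every $\norm{\phi_B\nabla f_i}_{\dot X^{-1/2}_{\zeta_j^{(n)}}}$ tends to zero. To upgrade this to~\eqref{qdecay}, decompose $q_i = \nabla\cdot(\phi_B\nabla f_i) + \abs{\nabla f_i}^2$ distributionally: the bounded compactly supported term $\abs{\nabla f_i}^2$ decays like $s^{-1/2}$ via~\eqref{singularity} and the trivial inhomogeneous bound $\norm{\cdot}_{X^{-1/2}_\zeta}\ll s^{-1/2}\norm{\cdot}_{L^2}$, and the divergence-form term is handled by a high/low frequency split in the style of Lemma~\ref{localizationlemma}: at $\abs{\xi}\gg s$ we have $\abs{p_\zeta(\xi)}^{1/2}\sim\abs{\xi}$, which cleanly absorbs the extra derivative, while at $\abs{\xi}\lesssim s$ the compact support provided by $\phi_B$ together with the averaged control produced above delivers the required decay.

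For the CGO solutions, I rewrite~\eqref{fixedpoint} as $\psi = T_i\psi := \Delta_\zeta^{-1} m_{q_i}\psi + \Delta_\zeta^{-1}q_i$ with $\zeta = \zeta_i^{(n)}$. The norm definitions force $\norm{\Delta_\zeta^{-1}}_{\dot X^{-1/2}_\zeta\to\dot X^{1/2}_\zeta}=1$, and Theorem~\ref{bilineartheorem} yields
\[
\norm{\Delta_\zeta^{-1}m_{q_i}}_{\dot X^{1/2}_\zeta\to\dot X^{1/2}_\zeta}\le\norm{m_{q_i}}_{\dot X^{1/2}_\zeta\to\dot X^{-1/2}_\zeta} \longrightarrow 0,
\]
either by~\eqref{endpointqestimate} in the $C^1$ case or~\eqref{bilinearqestimate} with $\theta = 0$ when $\norm{\nabla\log\gamma_i}_\infty$ is small. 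For $n$ large this operator norm is at most $\tfrac12$, so $T_i$ is a contraction on $\dot X^{1/2}_{\zeta_i^{(n)}}$ and its unique fixed point $\psi_i$ satisfies
\[
\norm{\psi_i}_{\dot X^{1/2}_{\zeta_i^{(n)}}}\le 2\norm{\Delta_\zeta^{-1}q_i}_{\dot X^{1/2}_{\zeta_i^{(n)}}}\le 2\norm{q_i}_{\dot X^{-1/2}_{\zeta_i^{(n)}}}\longrightarrow 0,
\]
which is~\eqref{solutiondecayendpoint}.

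The main obstacle is the middle step converting averaged control of $\phi_B\nabla f_i$ into genuine decay of $\norm{q_i}_{\dot X^{-1/2}_{\zeta_j^{(n)}}}$: a naive Fourier-side passage from $\nabla f_i$ to $\Delta f_i = \nabla\cdot(\nabla f_i)$ costs a factor of $\abs{\xi}$ that can be as large as $\sim s$ near the characteristic set $\Sigma_\zeta$, which would destroy the decay. Only the interplay between the cutoff by $\phi_B$ --- which smooths on the unit scale in Fourier and blurs the integrable singularity of $\abs{p_\zeta}^{-1/2}$ --- and the sharp $o(\lambda)$ endpoint in~\eqref{averageendpointestimate} gets past this loss. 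Once that step is carried out, the Chebyshev extraction of the sequence and the contraction mapping itself are completely routine.
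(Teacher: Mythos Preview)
Your contraction-mapping step and your handling of the lower-order term $|\nabla f_i|^2$ are both fine and match the paper. The gap is in the middle step, and it is exactly the obstacle you yourself flag in the last paragraph: with $f_i=\log g_i$, Lemma~\ref{averageestimate} only gives averaged decay of $\norm{\phi_B\nabla f_i}_{\dot X^{-1/2}_\zeta}$, which is one derivative short of $\norm{\Delta f_i}_{\dot X^{-1/2}_\zeta}$. Your proposed resolution does not work. The cutoff $\phi_B$ and the $o(\lambda)$ endpoint are already fully consumed in producing the decay of $\norm{\phi_B\nabla f_i}_{\dot X^{-1/2}_\zeta}$; they cannot be invoked a second time to absorb the extra $|\xi|$. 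Concretely, on the low-frequency region the characteristic set $\Sigma_\zeta$ sits at $|\xi|\sim s$, so passing from $\widehat{\phi_B\nabla f_i}$ to $i\xi\cdot\widehat{\phi_B\nabla f_i}$ genuinely costs a factor $\sim s$ there, and the averaged bound $\norm{\phi_B\nabla f_i}_{\dot X^{-1/2}_\zeta}^2=o(1)$ along the extracted sequence only yields $\norm{L\,\nabla\!\cdot(\phi_B\nabla f_i)}_{\dot X^{-1/2}_\zeta}^2=o(s^2)$, which is useless.

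The fix is simply to feed the correct function into Lemma~\ref{averageestimate}: apply it with $f=\partial_k\log g_i$ for each $k$ (these are bounded with compact support, hence in $L^2$), rather than with $f=\log g_i$. This is what the paper does, citing~\eqref{lowerorder} for the decomposition $q_i=\Delta\log g_i+|\nabla\log g_i|^2$. One then obtains averaged decay of $\sum_j\norm{\phi_B\,\partial_j\partial_k\log g_i}_{\dot X^{-1/2}_\zeta}^2$, and since $\phi_B\equiv 1$ on the support of these distributions, the triangle inequality gives averaged decay of $\norm{\Delta\log g_i}_{\dot X^{-1/2}_\zeta}$ directly, with no high/low split needed. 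After this correction your Chebyshev extraction and contraction argument go through exactly as written.
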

\begin{proof}
By Lemma~\ref{averageestimate} and~\eqref{lowerorder}, we have
\[\sum_{i,j} \frac 1 \lambda \int_{\eta_1 \in S^1}\int_\lambda^{2\lambda} \norm{q_i}_{\dot X^{-1/2}_{\zeta_j}}ds \to 0\]
as $\lambda \to \infty$. Thus there exist $\zeta = \zeta_{i}$ with $s \to \infty$ such that $\sum_{i,j} \norm{q_i}_{\dot X_{\zeta_j}^{-1/2}} \to 0$. This implies that for $q=q_i$ and $\zeta=\zeta_j$, we have $\norm{\Delta_{\zeta}^{-1} q}_{\dot X^{1/2}_\zeta} \to 0$. By~\eqref{qestimate} and~\eqref{endpointqestimate}, we can make $\norm{\Delta_\zeta^{-1} m_q}_{\dot X^{1/2}_\zeta \to \dot X^{1/2}_\zeta} = \norm{m_q}_{\dot X^{1/2}_\zeta \to \dot X_\zeta^{-1/2}}$ less than one. If $\gamma \in C^1$, we do this using~\eqref{endpointqestimate}, while if $\gamma \in W^{1,\infty}$ we do this using~\eqref{qestimate} and the fact that $\norm{\log \gamma}_{W^{1,\infty}}$ is small. Thus by the contraction mapping principle, there exists a solution to~\eqref{fixedpoint} satisfying $\norm{\psi}_{\dot X^{1/2}_\zeta} \to 0$ 
\end{proof}

\section{Proof of uniqueness}
To prove our main theorem, we first reduce to the case where the $\gamma_i$ are globally defined.
\begin{lemma}
\label{extensionlemma}
Let $\gamma_1,\gamma_2 \in W^{1,\infty}(\bar \Omega)$ be Lipschitz (or $C^1$) conductivities. If $\Lambda_{\gamma_1} = \Lambda_{\gamma_2}$, then each $\gamma_i$ can be extended to a function in $W^{1,\infty}(\R^d)$ (or $C^1(\R^d)$) with $\gamma_i = 1$ outside some ball $B$ satisfying 
\[\jap{m_{q_1} v_1,v_2} = \jap{m_{q_2} v_1, v_2}\]
for any solutions $v_i \in H^1_{\loc}(\R^d)$ to the Schr\"odinger equation $(-\Delta + q_i)v_i = 0$. 
\end{lemma}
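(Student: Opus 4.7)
The proof naturally splits into three steps: boundary identification, construction of a common extension, and a bilinear identity of Alessandrini type.

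For step 1, I would invoke Alessandrini's boundary determination theorem~\cite{Alessandrini1990}: for Lipschitz conductivities on a Lipschitz domain, $\Lambda_{\gamma_1}=\Lambda_{\gamma_2}$ forces $\gamma_1=\gamma_2$ on $\pd\Omega$; in the $C^1$ case a strengthened version of boundary identification additionally yields $\pd_\nu\gamma_1=\pd_\nu\gamma_2$ on $\pd\Omega$. For step 2, fix a ball $B$ with $\bar\Omega\ssub B$ and use an appropriate extension (Stein-type in the Lipschitz case, Whitney-type in the $C^1$ case) to produce $\tilde\gamma_1$ of the correct regularity class with $\tilde\gamma_1\equiv 1$ outside $B$. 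Then define $\tilde\gamma_2$ by $\gamma_2$ on $\bar\Omega$ and $\tilde\gamma_1$ on $\R^d\setminus\Omega$; the boundary matching from step 1 ensures the required global regularity of $\tilde\gamma_2$.

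For step 3, set $g_i=\tilde\gamma_i^{1/2}$ and $u_i=g_i^{-1}v_i\in H^1_{\loc}(\R^d)$, so that $L_{\tilde\gamma_i}u_i=0$ globally. Since the extensions coincide outside $\Omega$, the duality integrands defining $\jap{m_{q_i}v_1,v_2}$ agree pointwise there; after substituting $v_j=g_ju_j$ and carrying out the routine algebra on $\Omega$, the difference reduces to
\[
\jap{m_{q_1}v_1,v_2}-\jap{m_{q_2}v_1,v_2}=\int_\Omega(g_1\nabla g_2-g_2\nabla g_1)\cdot\nabla(u_1u_2)\,dx.
\]
I would then test the $H^1(\Omega)$ variational formulations of $L_{\tilde\gamma_1}u_1=0$ and $L_{\tilde\gamma_2}u_2=0$ against $\phi=g_1^{-1}g_2u_2$ and $\phi=g_2^{-1}g_1u_1$ respectively. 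Using $\tilde\gamma_1(g_1^{-1}g_2)=g_1g_2=\tilde\gamma_2(g_2^{-1}g_1)$, the diagonal bulk terms $(g_1g_2)\nabla u_1\cdot\nabla u_2$ cancel on subtraction; and since $g_1=g_2$ on $\pd\Omega$, the boundary traces of the test functions collapse to $u_2|_{\pd\Omega}$ and $u_1|_{\pd\Omega}$, so the identity becomes the Alessandrini pairing
\[
\int_\Omega(g_1\nabla g_2-g_2\nabla g_1)\cdot\nabla(u_1u_2)\,dx=\int_{\pd\Omega}[u_2\Lambda_{\gamma_1}(u_1|_{\pd\Omega})-u_1\Lambda_{\gamma_2}(u_2|_{\pd\Omega})]\,dS,
\]
which vanishes by $\Lambda_{\gamma_1}=\Lambda_{\gamma_2}$ together with the symmetry of the conductivity DtN map (Green's identity).

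The main technical obstacle lies in step 2: securing a globally $C^1$ companion $\tilde\gamma_2$ requires the boundary identification of $\pd_\nu\gamma_i$, which is more delicate than matching boundary values alone and relies on the full strength of $C^1$ regularity. The variational arguments in step 3 are otherwise routine given that $\tilde\gamma_i\in W^{1,\infty}(\R^d)$ is uniformly elliptic and bounded away from zero.
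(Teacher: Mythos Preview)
Your proposal is correct and follows the approach the paper has in mind, though the paper itself gives no self-contained argument: it simply cites \cite{Sylvester1987}, \cite{Alessandrini1990}, and the proof of Theorem~0.7 in \cite{Brown1996} for the details. The explicit computation you carry out in step~3---reducing $\jap{m_{q_1}v_1,v_2}-\jap{m_{q_2}v_1,v_2}$ to $\int_\Omega(g_1\nabla g_2-g_2\nabla g_1)\cdot\nabla(u_1u_2)\,dx$ via $g_i^{-1}v_1v_2=g_ju_1u_2$, and then identifying this with the DtN pairing through the variational formulations of $L_{\gamma_i}u_i=0$---is precisely the Alessandrini-type argument underlying those references.

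On the obstacle you flag for the $C^1$ case: the paper remarks only that ``the boundary determination result is only needed to identify $\gamma_1$ and $\gamma_2$ at the boundary, for which the Lipschitz hypothesis is sufficient,'' and does not spell out how a globally $C^1$ companion $\tilde\gamma_2$ is obtained from boundary-value matching alone. Your concern is legitimate: gluing $\gamma_2$ to $\tilde\gamma_1$ across $\pd\Omega$ gives only a Lipschitz function unless the normal derivatives also match. The resolution (implicit in the references) is that for $C^1$ conductivities the singular-solutions method of \cite{Alessandrini1990} in fact determines $\pd_\nu\gamma$ on $\pd\Omega$ as well, which is exactly the strengthened identification you invoke; the paper simply does not isolate this point.
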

\begin{proof}
This is essentially due to~\cite{Sylvester1987}, and in this case follows from the boundary determination result of~\cite{Alessandrini1990}. The details for the case of Lipschitz conductivities are worked out in the proof of Theorem 0.7 of~\cite{Brown1996}. Note that the boundary determination result is only needed to identify $\gamma_1$ and $\gamma_2$ at the boundary, for which the Lipschitz hypothesis is sufficient.
\end{proof}
The following argument is apparently due to Alessandrini~\cite{Brown1996}:
\begin{lemma}
\label{gaidentity}
Let $\gamma_i \in W^{1,\infty}(\R^d)$ and $g_i = \gamma_i^{1/2}$ be as above, and suppose $g_1^{-1} \Delta g_1= g_2 \Delta g_2^{-1}$ in the sense that
\[\int \nabla g_1 \cdot \nabla(g_1^{-1} \phi) dx = \int \nabla g_1 \cdot \nabla(g_1^{-1} \phi) dx\]
whenever $\phi \in C_0^1(\R^d)$. Then $g_1 = g_2$.
\end{lemma}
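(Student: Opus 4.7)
The plan is to follow the classical Alessandrini-type manipulation: substitute a carefully chosen test function into the weak hypothesis to rewrite it as the statement that $g_2/g_1$ is a weak solution of the conductivity equation $\div(\gamma_1 \nabla w) = 0$ with $\gamma_1 = g_1^2$, and then use the boundary condition $g_2/g_1 = 1$ outside $B$ (together with compact support) to conclude $g_2/g_1 \equiv 1$ by a one-line energy estimate.

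The key computation is substitution of $\phi = g_1 g_2 \psi$ for $\psi \in C_0^1(\R^d)$. Expanding using the product rule,
\[\nabla g_1 \cdot \nabla(g_1^{-1}\phi) = \nabla g_1 \cdot \nabla(g_2 \psi) = \psi\,\nabla g_1 \cdot \nabla g_2 + g_2\, \nabla g_1 \cdot \nabla \psi,\]
and symmetrically for the right-hand side, the symmetric $\psi\,\nabla g_1 \cdot \nabla g_2$ terms cancel. Subtracting, the hypothesis becomes
\[\int (g_2 \nabla g_1 - g_1 \nabla g_2) \cdot \nabla \psi\, dx = 0\]
for every $\psi \in C_0^1(\R^d)$. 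Since $g_2 \nabla g_1 - g_1 \nabla g_2 = -g_1^2 \nabla(g_2/g_1)$, this is exactly
\[\int g_1^2\, \nabla(g_2/g_1) \cdot \nabla \psi\, dx = 0,\]
i.e.\ $w := g_2/g_1$ is a weak solution of $\div(g_1^2 \nabla w) = 0$ on $\R^d$.

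To finish, set $u = w - 1 = g_2/g_1 - 1$. Since $g_i \in W^{1,\infty}$ with $g_i \geq c > 0$ and $g_i = 1$ outside $B$, the function $u$ is Lipschitz with compact support in $B$. Approximate $u$ by $C_0^1$ functions $\psi_n$ converging to $u$ in $H^1(\R^d)$ (e.g.\ by mollification of $u$, which is bounded in $W^{1,\infty}$ with uniform compact support); then test the identity above against $\psi_n$ and pass to the limit, using that $g_1^2 \in L^\infty$ and that $\nabla u = \nabla(g_2/g_1)$ is in $L^2$. This yields
\[\int g_1^2\, |\nabla u|^2\, dx = 0.\]
Since $g_1^2 \geq c^2 > 0$, we conclude $\nabla u = 0$ a.e., so $u$ is (a.e.) constant; as $u$ vanishes outside $B$, this constant is zero, giving $g_1 = g_2$.

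The only point requiring a moment of care is the substitution step: $\phi = g_1 g_2 \psi$ is not itself $C_0^1$, merely Lipschitz with compact support. However, the bilinear form $\phi \mapsto \int \nabla g_i \cdot \nabla(g_i^{-1}\phi)\, dx$ is continuous on $H^1_0(B')$ for any bounded open $B' \supset B$ (since $g_i^{-1}$ is Lipschitz and $\nabla g_i \in L^\infty$), so the hypothesis extends by density from $C_0^1$ to $H^1_0(B')$, and $g_1 g_2 \psi$ lies in this space. This density step, together with the analogous approximation of $u$ in the final energy identity, is the only technical obstacle, and it is standard given the $W^{1,\infty}$ regularity available here.
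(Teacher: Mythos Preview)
Your proof is correct and follows essentially the same route as the paper: both substitute $\phi = g_1 g_2 \psi$ into the weak hypothesis to obtain $\int (g_2\nabla g_1 - g_1\nabla g_2)\cdot\nabla\psi\,dx = 0$, and then choose $\psi$ to turn this into an energy identity (the paper writes the integrand as $g_1 g_2\,\nabla(\log g_1 - \log g_2)\cdot\nabla\psi$ and takes $\psi = \log g_1 - \log g_2$, while you write it as $-g_1^2\,\nabla(g_2/g_1)\cdot\nabla\psi$ and take $\psi = g_2/g_1 - 1$). Your version is in fact more careful than the paper's in justifying the density/approximation steps.
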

\begin{proof}
For $\phi = g_1 g_2 \psi$, we have
\[\int g_1g_2 \nabla(\log g_1 - \log g_2) \cdot \nabla \psi = 0.\]
Set $\psi = \log g_1 - \log g_2$. Since the $g_i$ are nonnegative and agree outside some large ball, this implies $g_1=g_2$.
\end{proof}
\begin{remark*}
In order to control the $\norm{\nabla \log \gamma_i}_{L^\infty(\R^n)}$ norm of the extensions, we must in principle assume that the $\norm{\log \gamma_i}_{W^{1,\infty}(\Omega)}$ are small. In this case, however, we may dispense with the hypothesis that $\norm{\log \gamma_i}_{L^\infty(\R^n)}$ is small\footnote{We would like to thank one of the anonymous reviewers for pointing out that this relaxed hypothesis should be sufficient.}. This is because solving the equation $L_\gamma u = 0$ is equivalent to solving the equation $L_{c\gamma} u = 0$, where $c$ is any positive constant. The quantities $\norm{\nabla \log \gamma_i}_{L^\infty}$ do not change when we replace $\gamma_i$ by $c_i\gamma_i$. By choosing $c_i$ appropriately, we can ensure that the average $\fint_\Omega \log (c_i\gamma_i(x)) dx$ vanishes, and since $\Omega$ is bounded and connected we have
\[\norm{\log c_i \gamma_i}_{L^\infty(\Omega)} \ll \norm{\nabla \log \gamma_i}_{L^\infty(\Omega)}.\]
In other words, if $\norm{\nabla \log \gamma_i}_{L^\infty(\Omega)}$ is small enough, then $\norm{\log c_i \gamma_i}_{W^{1,\infty}(\Omega)}$ will be small, and we can construct CGO solutions to $L_{\gamma_i} u = 0$.
\end{remark*}
We now use a bootstrapping argument to get $v \in H^1_{\loc}(\R^d)$ (cf.~\cite{Brown1996}). If we set $v = e^{x\cdot \zeta}(1 + \psi)$, then $v \in L^2_{\loc}$ by~\eqref{lfestimate}. By construction, we also have $\Delta v = qv$. Now, $q(1 + \psi) \in \dot X_\zeta^{-1/2}$, by~\eqref{qdecay} and~\eqref{qestimate}. Since $\abs{p_\zeta(\xi)} \ll_{\zeta} \jap{\xi}^2$, this implies that $q(1+\psi) \in H^{-1}$. Since multiplying by a smooth compactly supported function preserves $H^{-1}$, we have $\Delta v \in H^{-1}$, which implies in turn that $v \in \dot H^{1}$. Since $v$ is already in $L^2_{\loc}$, we have $v \in H^1_{\loc}$ as well.

If $\gamma_1$ and $\gamma_2$ are two Lipschitz conductivities, then by Lemma~\ref{extensionlemma} we have $\jap{m_{q_1}v_1,v_2} = \jap{m_{q_2}v_1,v_2}$ for the CGO solutions $v_1,v_2$ that we have constructed. Fix $k\in\R^d$, and let $v_i$ be corresponding CGO solutions corresponding to $\zeta_i$ as constructed in the theorem. Let $\phi\in C_0^\infty$ be a cutoff function such that $\phi = 1$ on the supports of the $q_i$. Then we have
\begin{align*}
\jap{m_{q_i}v_1,v_2} &= \jap{m_{q_i}\phi v_1,\phi v_2}\\
&=\jap{m_{q_i}\phi e^{x\cdot \zeta_1}(1 + \psi_1),\phi e^{x\cdot \zeta_2}(1+\psi_2) } \\
&=\jap{q_i, e^{ix\cdot k}} + \jap{m_{q_i}\phi e^{ix\cdot k}, \psi_1 + \psi_2} + \jap{m_{q_i} \phi e^{ixk/2} \psi_1, \phi e^{ixk/2} \psi_2}.
\end{align*}
Now $\tilde \phi := \phi e^{ix\cdot k/2} \in C_0^\infty$, and we have
\begin{align*}
\abs{\jap{m_{q_i}\phi e^{ix\cdot k}, \psi_1 + \psi_2}} & \ll \norm{\tilde \phi q_i}_{\dot X^{-1/2}_{\zeta_1}} \norm{\psi_1}_{\dot X^{1/2}_{\zeta_1}}+\norm{\tilde \phi q_i}_{\dot X^{-1/2}_{\zeta_2}} \norm{\psi_2}_{\dot X^{1/2}_{\zeta_2}}\\
 & \ll \norm{q_i}_{X^{-1/2}_{\zeta_1}} \norm{\psi_1}_{\dot X^{1/2}_{\zeta_1}}+\norm{q_i}_{X^{-1/2}_{\zeta_2}} \norm{\psi_2}_{\dot X^{1/2}_{\zeta_2}}\\
& \to 0
\end{align*}
as $s \to \infty$ by~\eqref{solutiondecayendpoint} and~\eqref{qdecay}. Finally, we have
\[\abs{\jap{m_{q_i} \phi e^{ixk/2} \psi_1, \phi e^{ixk/2} \psi_2}} \ll \norm{\psi_1}_{\dot X^{1/2}_{\zeta_1}} \norm{\psi_2}_{\dot X^{1/2}_{\zeta_2}} \to 0\]
by~\eqref{bilinearqestimate},~\eqref{adjointsingularity} and~\eqref{solutiondecayendpoint}. Thus for any $k \in \R^d$ we have
\[\int \nabla \gamma_1^{1/2} \cdot \nabla(\gamma_1^{-1/2}e^{ix\cdot k}) dx = \int \nabla \gamma_2^{1/2} \cdot \nabla(\gamma_2^{-1/2}e^{ix\cdot k}) dx.\]
This implies $q_1 = q_2$ in the sense of Lemma~\ref{gaidentity}.
\bibliography{/home/boaz/Documents/library.bib}{}

\providecommand{\bysame}{\leavevmode\hbox to3em{\hrulefill}\thinspace}
\providecommand{\MR}{\relax\ifhmode\unskip\space\fi MR }
\providecommand{\MRhref}[2]{%
  \href{http://www.ams.org/mathscinet-getitem?mr=#1}{#2}
}
\providecommand{\href}[2]{#2}
\begin{thebibliography}{GLU03}

\bibitem[Ale90]{Alessandrini1990}
Giovanni Alessandrini, \emph{{Singular solutions of elliptic equations and the
  determination of conductivity by boundary measurements}}, Journal of
  Differential Equations \textbf{84} (1990), no.~2, 252--272.

\bibitem[Bou93]{Bourgain1993}
J.~Bourgain, \emph{{Fourier transform restriction phenomena for certain lattice
  subsets and applications to nonlinear evolution equations}}, Geometric And
  Functional Analysis \textbf{3} (1993), no.~2, 107--156.

\bibitem[Bro96]{Brown1996}
Russell~M. Brown, \emph{{Global Uniqueness in the Impedance-Imaging Problem for
  Less Regular Conductivities}}, SIAM Journal on Mathematical Analysis
  \textbf{27} (1996), no.~4, 1049.

\bibitem[BT03]{Brown2003}
Russell~M. Brown and Rodolfo~H. Torres, \emph{{Uniqueness in the Inverse
  Conductivity Problem for Conductivities with 3/2 Derivatives in $L^p$,
  $p>2n$}}, Journal of Fourier Analysis and Applications \textbf{9} (2003),
  no.~6, 563--574.

\bibitem[Cal80]{CalderonAlbertoP.}
Alberto~P. Calder\'{o}n, \emph{{On an inverse boundary value problem}},
  Computational and Applied Mathematics \textbf{25} (1980), no.~2-3, 133--138.

\bibitem[GLU03]{Greenleaf2003}
Allan Greenleaf, Matti Lassas, and Gunther Uhlmann, \emph{{The Calder\'{o}n
  problem for conormal potentials I: Global uniqueness and reconstruction}},
  Communications on Pure and Applied Mathematics \textbf{56} (2003), no.~3,
  328--352.

\bibitem[GU01]{Greenleaf2001a}
Allan Greenleaf and Gunther Uhlmann, \emph{{Local uniqueness for the
  Dirichlet-to-Neumann map via the two-plane transform}}, Duke Mathematical
  Journal \textbf{108} (2001), no.~3, 599--617.

\bibitem[Kim08]{Kim2008}
Sung~Eun Kim, \emph{{Calder\'{o}n's problem for Lipschitz piecewise smooth
  conductivities}}, Inverse Problems \textbf{24} (2008), no.~5, 055016.

\bibitem[KV84]{Kohn1984}
Robert Kohn and Michael Vogelius, \emph{{Determining conductivity by boundary
  measurements}}, Communications on Pure and Applied Mathematics \textbf{37}
  (1984), no.~3, 289--298.

\bibitem[PPU03]{Paivarinta2003}
Lassi P\"{a}iv\"{a}rinta, Alexander Panchenko, and Gunther Uhlmann,
  \emph{{Complex geometrical optics solutions for Lipschitz conductivities}},
  Revista Matematica Iberoamericana \textbf{19} (2003), no.~1, 57--72.

\bibitem[SU86]{Sylvester1986}
John Sylvester and Gunther Uhlmann, \emph{{A uniqueness theorem for an inverse
  boundary value problem in electrical prospection}}, Communications on Pure
  and Applied Mathematics \textbf{39} (1986), no.~1, 91--112.

\bibitem[SU87]{Sylvester1987}
\bysame, \emph{{A global uniqueness theorem for an inverse boundary value
  problem}}, The Annals of Mathematics \textbf{125} (1987), no.~1, 153--169.

\end{thebibliography}
\bibliographystyle{amsalpha}
\end{document}